\newtheorem{theorem}{Theorem}[section]
\newtheorem{lemma}[theorem]{Lemma}
\newtheorem{corollary}[theorem]{Corollary}
\newtheorem{definition}[theorem]{Definition}
\newtheorem{function}[theorem]{Function}
\newtheorem{algorithm}[theorem]{Algorithm}
\theoremstyle{definition}
\newtheorem{example}{Example}
\newtheorem{remark}{Remark}
\newcommand{\Z}{\mathbf{Z}}
\newcommand{\ZZ}{\mathbf{Z}}
\newcommand{\Q}{\mathbf{Q}}
\newcommand{\QQ}{\mathbf{Q}}
\newcommand{\OO}{\mathcal{O}}
\newcommand{\tors}{\operatorname{tors}}
\newcommand{\disc}{\operatorname{disc}}
\newcommand{\ra}{\rightarrow}
\begin{document}
\date{\today}
\title{Computation on Elliptic Curves with Complex Multiplication}
\author[P. Clark  \and P. Corn \and A. Rice \and J. Stankewicz 
]{Pete L. Clark \and Patrick Corn \and Alex Rice \and James Stankewicz 
}

\begin{abstract}

We give the complete list of possible torsion subgroups of elliptic curves with complex multiplication over number fields of degree 1-13. Additionally we describe the algorithm used to compute these torsion subgroups and its implementation.

\end{abstract}
\maketitle
\section{Introduction}

\subsection{The main results}
The goal of this paper is to present a complete list of possible torsion subgroups of elliptic curves with complex multiplication over number fields of small degree. Our main tool is an algorithm whose input is a positive integer $d$. The output is a (necessarily finite) list of isomorphism classes of finite abelian groups $G$ such that $G$ is isomorphic to $E(K)[\tors]$ for some number field $K$ of degree $d$ and some elliptic curve $E$ defined over $K$ with complex multiplication. 

Our algorithm requires a complete list of imaginary quadratic fields of class number $h$ for all integers $h$ which properly divide $d$. Fortunately, M. Watkins \cite{Watkins} has enumerated all imaginary quadratic fields with 
class number $h\le 100$, which would in theory allow us to run our algorithm for all $d \le 201$ 
(and for infinitely many other values of $d$, for instance all prime values). 

We implemented our algorithm using the MAGMA programming language and ran it on Unix 
servers in the University of Georgia Department of Mathematics. The result, after doing some additional analysis, is a complete list of torsion subgroups for degree $d$ with $1\le d \le 13$. This list, for each degree $d$, is described in 
Section~\ref{lists}.$d$.

For $d=1$ these computations were first done by L. Olson in 1974 \cite{Olson}, 
whereas for $d=2$ and $3$ they are a special case of work of H. Zimmer and his collaborators over a ten year period 
from the late 1980's to the late 1990's \cite{Zimmer1}, \cite{Zimmer2}, \cite{Zimmer3}. We believe that our results are new for $4 \le d \le 13$.

This work was begun during a VIGRE research group led by Pete L. Clark and Patrick Corn and attended by Brian Cook, Steve Lane, Alex Rice, James Stankewicz, Nathan Walters, Stephen Winburn and Ben Wyser at the University of Georgia. Alex Rice, James Stankewicz, Nathan Walters and Ben Wyser were partially supported by NSF VIGRE grant DMS-0738586 during this work. James Stankewicz was also partially supported by the Van Vleck fund at Wesleyan University. Special thanks go to Jon Carlson, who offered use of his MAGMA server and invaluable support with coding in MAGMA. Thanks to Bianca Viray for a helpful discussion of the proof of Lemma \ref{SPECIALIZATION}. Thanks also go to Andrew Sutherland, whose interest in this project demanded that this paper be polished into publishable form.

\subsection{Connections to prior work} According to the celebrated \textbf{uniform boundedness theorem} of L. Merel \cite{Merel}, 
for any fixed $d \in \Z_{> 0}$, the supremum of the size of all rational torsion 
subgroups of all elliptic curves defined over all number fields of degree $d$ is finite.  

In 1977, B. Mazur proved uniform boundedness for $d = 1$ (i.e., for elliptic curves $E_{/\Q}$) \cite{Mazur}.   
Moreover, Mazur gave a complete classification of the possible torsion subgroups:

\[
E(\QQ)[\tors]\in \begin{cases}\ZZ/m\ZZ & \textrm{for } m = 1,\dots,10,12, \\ \ZZ/2\ZZ \oplus\ZZ/2m\ZZ & \text{for $m = 1, \ldots, 4$.} \end{cases}
\]

Work of Kamienny \cite{Kamienny86}, \cite{Kamienny92} and of Kenku and Momose \cite{KenkuMomose}
gives the following result when $K$ is a quadratic number field:

\[
E(K)[\tors] \in \begin{cases} \ZZ/m\ZZ & \textrm{for } m=1,\dots,16,18, \\ \ZZ/2\ZZ \oplus \ZZ/2m\ZZ & \textrm{for } m = 1,\dots,6, \\ \ZZ/3\ZZ \oplus \ZZ/m\ZZ & \text{for $m=3,6,$} \\ and & \ZZ/4\ZZ \oplus \ZZ/4\ZZ. \end{cases}
\]

This and similar subsequent enumeration results over varying number fields are to be understood in the following sense.  First, for 
any quadratic field $K$ and any elliptic curve $E_{/K}$, the torsion subgroup of $E(K)$ is 
isomorphic to one of the groups listed.  Second, for each of the groups $G$ listed, there exists at 
least one quadratic field $K$ and an elliptic curve $E_{/K}$ with $E(K)[\tors] \cong G$. A complete classification of torsion subgroups of elliptic curves over cubic fields is not yet known. 

Further results come from focusing on particular classes of elliptic curves. Notably H. Zimmer and his collaborators 
have done extensive computations on torsion in elliptic curves with $j$-invariant in the ring of algebraic integers. In \cite{Zimmer1}, M\"uller, Stroher and Zimmer proved that in the case of integral $j$-invariant, if $K$ is a quadratic number field then 
\[
E(K)[\tors] \in \begin{cases} \ZZ/m\ZZ &\textrm{for } m=1,\dots,8,10, \\ \ZZ/2\ZZ \oplus \ZZ/m\ZZ &\textrm{for }m=2,4,6, \\ \textrm{and} &\ZZ/3\ZZ \oplus \ZZ/3\ZZ.\end{cases}
\]

In \cite{Zimmer3} Peth\"o, Weis and Zimmer showed that if $E$ has integral $j$-invariant and $K$ is a cubic number field then 
\[
E(K)[\tors] \in \begin{cases} \ZZ/m\ZZ &\textrm{for } m=1,\dots,10,14, \\ \ZZ/2\ZZ \oplus \ZZ/m\ZZ & \textrm{for } m=2,4,6.\end{cases}
\]

Here we study elliptic curves with complex multiplication.  Such curves form a subclass of 
curves with integral $j$-invariant \cite[Theorem. II.6.4]{SilvermanII}, so our results are subsumed by the above results for $d \le 3$; but, as we will see, the CM hypothesis allows us to extend our computations to higher values of $d$, up to $d=13$.

\section{Background}\label{background}

\subsection{Kubert-Tate normal form} The fundamental result on which our algorithm rests is the following elementary theorem, which gives a parameterization of all elliptic curves with an $N$-torsion point for $N \ge 4$.

\begin{theorem} \label{kubert} (Kubert) Let $E$ be an elliptic curve over a field $K$ and $P \in E(K)$ a point of order at least $4$. Then $E$ has an equation of the form
\begin{equation} \label{knf}
y^2 + (1-c)xy - by = x^3-bx^2
\end{equation}
for some $b,c \in K$, and $P = (0,0)$. \end{theorem}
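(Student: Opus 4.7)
\bigskip

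\noindent\textbf{Proof proposal.} The plan is to start from an arbitrary Weierstrass model
\[
y^2 + a_1 xy + a_3 y = x^3 + a_2 x^2 + a_4 x + a_6,
\]
translate so that $P$ sits at the origin, and then apply two successive admissible changes of variables, each licensed by one of the non-vanishing conditions packaged into the hypothesis that $P$ has order at least $4$ (namely $2P \ne O$ and $3P \ne O$).

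First I would translate $P$ to $(0,0)$; substituting the coordinates of $P$ into the general Weierstrass equation shows the constant term vanishes, so $a_6 = 0$. Next, using the standard inversion formula $-(x,y) = (x, -y - a_1 x - a_3)$, one sees $-(0,0) = (0,-a_3)$, so the condition $2P \neq O$ (i.e.\ $P \ne -P$) forces $a_3 \ne 0$. Then I would apply the shear $y \mapsto y + (a_4/a_3)x$, which preserves the fact that $(0,0)$ lies on the curve, leaves $a_3$ nonzero, and is easily checked to kill the coefficient $a_4$. At this stage the equation takes the form
\[
y^2 + a_1 xy + a_3 y = x^3 + a_2 x^2.
\]

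Now I would compute $2P$ explicitly. Since $a_4 = 0$, the tangent to the curve at $(0,0)$ is the horizontal line $y = 0$, and substituting into the equation gives $0 = x^2(x + a_2)$, so the third intersection with the tangent is $(-a_2, 0)$. Inverting via the formula above yields $2P = (-a_2, a_1 a_2 - a_3)$. The hypothesis $3P \ne O$, i.e.\ $2P \ne -P = (0, -a_3)$, therefore forces $a_2 \ne 0$. This is the key step: the order-$4$ assumption is precisely what allows us to claim $a_2 \ne 0$ and proceed.

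Finally, I would use the admissible scaling $(x,y) \mapsto (u^2 x, u^3 y)$ with $u = a_3/a_2 \in K^\times$, which rescales $a_2 \mapsto a_2/u^2$ and $a_3 \mapsto a_3/u^3$ and thereby equalizes them. Setting $b := -a_2 = -a_3$ and $c := 1 - a_1$ after the scaling then puts the model into the asserted Kubert-Tate shape, with $P$ still at $(0,0)$ because each transformation fixes the origin. The only conceptual obstacle is the step pinning down $a_2 \ne 0$; once that is in hand, everything else is linear algebra and bookkeeping with the standard admissible transformations of Weierstrass equations.
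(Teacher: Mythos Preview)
Your argument is correct and is precisely the standard proof of this result: translate $P$ to the origin, use $2P\neq O$ to get $a_3\neq 0$ and shear away $a_4$, use $3P\neq O$ to get $a_2\neq 0$, then scale to force $a_2=a_3$. The paper itself does not give a proof but simply cites \cite{Kubert} and \cite{Zimmer1}, \S3; what you have written is essentially the argument one finds there, so there is nothing to compare.
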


\begin{proof}  This first appeared in \cite{Kubert}.  See for instance \cite{Zimmer1}, \S3.\end{proof}

We will call the equation (\ref{knf}) the {\em Kubert-Tate normal form} 
of $E$, and our notation for a curve in Kubert-Tate normal form with parameters $b, c$ as above will be simply $E(b,c)$. The $j$-invariant of this elliptic curve is
\begin{equation}\label{jinv}
j(b,c) = \frac{(16b^2 + 8b(1-c)(c+2) + (1-c)^4)^3}{b^3(16b^2 - b(8c^2+20c-1) - c(1-c)^3)}.
\end{equation}

\begin{remark} This form is unique for a given curve with a fixed point of order at least 4. In practice we use this to find elliptic curves with some primitive $N$-torsion point, so an elliptic curve $E$ may have many isomorphic Kubert-Tate normal forms, depending on which torsion point we choose to send to $(0,0)$. \end{remark}
 
\begin{example}
\label{Example1}
Here are some small multiples of the point $(0,0)$ on $E(b,c)$: 
\[ [2](0,0) = (b,bc), \]
\[ [3](0,0) = (c,b-c), \]
\[
[4](0,0) = \left( \frac{b(b-c)}{c^2}, \frac{b^2(c^2+c-b)}{c^3} \right),
\]
\[
[5](0,0) = \left( \frac{bc(c^2+c-b)}{(b-c)^2}, \frac{bc^2(b^2-bc-c^3)}{(b-c)^3} \right),
\]
\[ [6](0,0) = \left(\frac{(b-c)(b^2 - bc - c^3)}{A^2}, \frac{c(b-c)^2(2b^2- b c(c-3) + c^{2})}{A^3} \right), \]

\[
[7](0,0) = \left( \frac{Abc((b-c)^2 + Ab)}{(b^2-bc-c^3)^2}, \frac{(Ab)^2((b-c)^3 + c^3A)}{(b^2-bc-c^3)^3} \right),
\]
where $A = b-c-c^2$.  In particular we see that for $N \leq 3$, $(0,0)$ cannot be an $N$-torsion point on $E(b,c)$.  
\end{example}

\subsection{Modular curves} The affine modular curve $Y_1(N)$ for $N \ge 4$ is a fine moduli space for pairs $(E,P)$ where $E$ is an elliptic curve and $P$ is a point of exact order $N$ on $E$. We will search for CM-points on $Y_1(N)$ for various values of $N\ge 4$; that is, points over various number fields which correspond to CM elliptic curves with an $N$-torsion point(the $Y_1(N)$ for $1\le N\le 3$ are coarse moduli spaces and so will only give us the information we desire over an algebraically closed field). Kubert curves give a down-to-earth way of constructing a defining equation for $Y_1(N)$.

\begin{definition} Let $\QQ(b,c)$ be a rational function field, and let $E_{/\QQ(b,c)}$ denote the elliptic curve given by equation (\ref{knf}). If $N\ge 3$ is an integer, let $n_1,d_1,n_2,d_2\in \QQ[b,c]$ be such that $(n_i,d_i) =1$, $d_i$ is monic, and $$x\left(\left[\left\lceil\dfrac{N}{2}\right\rceil -1\right](0,0)\right) = \dfrac{n_1(b,c)}{d_1(b,c)}, x\left(\left[\left\lfloor\dfrac{N}{2} \right\rfloor+1\right](0,0)\right) = \dfrac{n_2(b,c)}{d_2(b,c)}.$$

Then we let $f_N(b,c) = n_1d_2 - n_2d_1 \in \QQ[b,c]$.
\end{definition}

\begin{lemma}
\label{SPECIALIZATION}
Let $k$ be a field, let $Y_{/k}$ be an integral algebraic variety, let $q: A \rightarrow Y$ be a relative 
abelian variety, and let $y$ be a closed point of $Y$.  Then the specialization 
map $\mathfrak{s}: A(K(Y)) \rightarrow A_y(k(y))$ is a group homomorphism.
\end{lemma}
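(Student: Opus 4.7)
The plan is to reduce the claim to functoriality of specialization, using that the group laws on both $A(K(Y))$ and $A_y(k(y))$ are induced by the single multiplication morphism $m: A \times_Y A \to A$ belonging to the relative abelian variety structure. Recall that $q: A \to Y$ comes with $m$ and an identity section $e: Y \to A$, whose fiberwise restrictions $m_y, e_y$ recover the group law on each abelian variety $A_y$; and that a point $P \in A(K(Y))$ is just a rational section of $q$, with sum defined by $P_1 + P_2 := m \circ (P_1, P_2): \operatorname{Spec}(K(Y)) \to A$.

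Next I would spell out the specialization map. The key technical input is the N\'eron mapping property: properness of $A \to Y$, combined with Weil's group-scheme smearing argument for extending rational sections of a smooth group scheme across points of the base, implies that every $P \in A(K(Y))$ extends to a genuine section $\tilde P: U \to A$ on some open $U \subseteq Y$ containing $y$, and $\mathfrak{s}(P)$ is then the pullback of $\tilde P$ along the inclusion $\operatorname{Spec}(k(y)) \hookrightarrow U$. Given two points $P_1, P_2 \in A(K(Y))$, shrink $U$ so that both extensions $\tilde P_i$ are defined on it. The morphism $m \circ (\tilde P_1, \tilde P_2): U \to A$ is then a section extending $P_1 + P_2$, so by uniqueness of the extension it must equal $\widetilde{P_1 + P_2}$. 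Pulling back to the fiber over $y$ and using that base change commutes with fiber products yields
\[
\mathfrak{s}(P_1 + P_2) = m_y \circ (\mathfrak{s}(P_1), \mathfrak{s}(P_2)) = \mathfrak{s}(P_1) + \mathfrak{s}(P_2),
\]
and the compatibility $\mathfrak{s}(e) = e_y$ is immediate from the fact that the identity section $e: Y \to A$ is already everywhere defined.

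The main obstacle is the extension step: \emph{a priori} a rational section of $A \to Y$ need not be defined at $y$, so without the N\'eron mapping property the map $\mathfrak{s}$ is not even well-defined, let alone a homomorphism. Once every $P$ is known to extend across $y$, the homomorphism property is a formal consequence of the compatibility of base change with finite limits.
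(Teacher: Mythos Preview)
Your argument and the paper's share the same kernel: once a rational section of an abelian scheme extends across $y$, the homomorphism property follows formally from the compatibility of the multiplication morphism with base change. Where you diverge is in how you justify the extension step. You invoke the N\'eron mapping property and Weil's extension argument directly on $Y$, but both of these require the base to be regular: Weil's theorem extends rational maps from \emph{smooth} varieties into abelian varieties, and the statement that an abelian scheme is the N\'eron model of its generic fiber (so that $A(Y)=A(K(Y))$) is a theorem over regular bases. If $Y$ is singular at $y$, a rational section need not extend across $y$ at all---think of a nodal curve, where the two branches may carry different limits---so your map $\mathfrak{s}$ is not even well-defined, and the argument breaks down precisely at the point you flag as ``the main obstacle.''

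The paper confronts this head-on by reducing to the case of a nonsingular curve before invoking the N\'eron property. First it treats nonsingular curves directly via \cite[Proposition~I.2.8]{BLR}; then for singular curves it passes to the normalization $\tilde Y\to Y$, identifies the fiber over a preimage $\tilde y$ with the fiber over $y$, and applies the nonsingular case; finally, for higher-dimensional $Y$ it chooses a flag $\{y\}=Y_0\subset Y_1\subset\cdots\subset Y_d=Y$ of irreducible closed subvarieties and specializes one step at a time from the generic point of $Y_i$ to that of $Y_{i-1}$, each step being a curve-type specialization. Your direct approach is cleaner when $Y$ is regular, but to cover the general statement you would need to insert exactly this kind of reduction---either by normalizing or by interpolating through a flag---before appealing to the extension property.
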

\begin{proof}
This result appears in \cite[p. 40]{Lang}.  Lang's (wonderful) text is
rather informally written: many results, including this one, are given there without proof or reference.  For the convenience of the reader we give a proof.  \\
Step 1: Suppose $Y$ is a nonsingular curve.  Then $A_{/Y}$ is equal to the \textbf{N\'eron model} of its generic fiber, so the map $\mathfrak{s}$ is a homomorphism by \cite[Proposition I.2.8]{BLR}.  \\
Step 2: Suppose $Y$ is a singular curve.  Let $\pi: \tilde{Y} \ra Y$ be its normalization, and let $\tilde{y}$ be a closed point of $\tilde{Y}$ with $\pi(\tilde{y}) = y$. Let $\tilde{A} = \pi^*(q) \ra \tilde{Y}$ be the pullback of the family to $\tilde{Y}$.  Then the fiber of $\tilde{A}$ over $\tilde{y}$ is canonically identified with the fiber of $A$ over $y$, and thus the specialization map $\tilde{s}: \tilde{A}(K(\tilde{Y})) \ra A_{\tilde{y}}(k(\tilde{y}))$ is canonically identified with $\mathfrak{s}$.  We 
have reduced to Step 1.  \\
Step 3: In the general case we choose a chain of closed irreducible subvarieties 
$Y_0 = \{y\} \subset Y_1 \subset \ldots Y_d = Y$ containing $y$, with $\dim Y_i = i$.  We apply Step 2 repeatedly, specializing from the generic point of $Y_i$ 
to the generic point of $Y_{i-1}$.  
\end{proof}

\begin{lemma}
\label{PICKYLEMMA}
If $b_0,c_0 \in \overline \QQ$ and $E(b_0,c_0)$ is an elliptic curve given by equation (\ref{knf}) then the point $(0,0)$ on $E(b_0,c_0)$ is an $N$-torsion point if and only if $f_N(b_0,c_0) = 0$.\end{lemma}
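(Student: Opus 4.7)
The plan is to use the specialization homomorphism of Lemma~\ref{SPECIALIZATION} to relate the generic data used to define $f_N$ to the arithmetic of the specialized curve, and then to analyze both directions of the biconditional separately, with the ``if'' direction requiring a density/closedness argument on the vanishing locus of $f_N$.

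Set $m_1 = \lceil N/2 \rceil - 1$ and $m_2 = \lfloor N/2 \rfloor + 1$, so $m_1 + m_2 = N$ and $m_2 - m_1 \in \{1,2\}$; for $N \geq 3$ both are positive. First I apply Lemma~\ref{SPECIALIZATION} to the relative elliptic curve $A \to Y$, where $Y$ is the complement in $\mathbb{A}^2_{b,c}$ of the discriminant locus, and $y_0 = (b_0, c_0)$: the specialization $A(\mathbb{Q}(b,c)) \to E(b_0,c_0)(\overline{\mathbb{Q}})$ is a group homomorphism sending the generic $P=(0,0)$ to $(0,0)$, hence $[m_i]P$ to $[m_i](0,0)$. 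Since $[m_2-m_1]P \ne O$ on \emph{any} Kubert curve by Example~\ref{Example1}, having $P$ be $N$-torsion on $E(b_0,c_0)$ is equivalent to $[m_1]P = -[m_2]P$ there, and hence to $[m_1]P$ and $[m_2]P$ having the same $x$-coordinate (where ``same'' includes the case both equal $O$). The one observation I use repeatedly is: if $d_i(b_0,c_0) \neq 0$ then $n_i/d_i$ has a finite value there, so by the homomorphism property $[m_i](0,0) \neq O$ on $E(b_0,c_0)$. Contrapositively, $[m_i](0,0) = O$ forces $d_i(b_0,c_0)=0$.

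For the $(\Rightarrow)$ direction, suppose $(0,0)$ is $N$-torsion. Either both $[m_i](0,0)$ are $O$, in which case $d_1(b_0,c_0) = d_2(b_0,c_0) = 0$ and $f_N(b_0,c_0) = n_1 d_2 - n_2 d_1$ vanishes at $y_0$; or both are finite affine points with equal $x$-coordinates, whence $d_1(y_0), d_2(y_0) \neq 0$ and $\frac{n_1}{d_1}(y_0) = \frac{n_2}{d_2}(y_0)$, again giving $f_N(y_0) = 0$.

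The $(\Leftarrow)$ direction is the main obstacle, because when $d_1$ and $d_2$ both vanish at $y_0$ the $x$-coordinates $n_i/d_i$ may be genuinely indeterminate and one cannot read off whether $[m_i](0,0) = O$ directly. I would handle this by working with irreducible components. The locus $Z_N = \{y \in Y : [N](0,0) = O \text{ on } E(y)\}$ is closed in $Y$, being the pullback of the zero section under the section $[N]P$. It suffices to show every irreducible component $C$ of $V(f_N) \cap Y$ lies in $Z_N$; since $Z_N$ is closed, this follows from verifying $[N]P = O$ at the generic point $\eta_C$. If $d_1, d_2$ are both nonzero at $\eta_C$ the Case~A analysis from the previous paragraph applies verbatim; otherwise, say $d_1$ vanishes on $C$, so $C$ is an irreducible component of $V(d_1)$, and because $(n_1, d_1) = 1$ in $\mathbb{Q}[b,c]$ the vanishing locus $V(n_1) \cap V(d_1)$ is $0$-dimensional, so $n_1 \ne 0$ at $\eta_C$. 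Thus $X_{m_1}$ has a pole at $\eta_C$, forcing $[m_1]P = O$ there. Now $f_N = n_1 d_2 - n_2 d_1 = n_1 d_2$ at $\eta_C$, and $f_N(\eta_C) = 0$ with $n_1(\eta_C) \ne 0$ forces $d_2(\eta_C) = 0$; by the analogous coprimality argument $n_2(\eta_C) \ne 0$, so $[m_2]P = O$ at $\eta_C$ as well, and $[N]P = O$ there. (The degenerate possibility $f_N \equiv 0$ falls out of the same argument applied to $C = Y$.) The delicate step—and the one I expect to need the most care—is precisely this reduction to generic points together with the coprimality-based exclusion of pathological intersections $V(n_i) \cap V(d_i)$, because it is what rescues the naive ``equal $x$-coordinate'' criterion at points where both rational functions are indeterminate.
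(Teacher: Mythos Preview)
Your overall strategy matches the paper's: both directions hinge on comparing the $x$-coordinates of $[m_1]P$ and $[m_2]P$, and for $(\Leftarrow)$ you pass to the generic point of an irreducible component of $V(f_N)$ and invoke specialization. In fact your $(\Leftarrow)$ is more thorough than the paper's, which simply asserts ``by construction $[N](0,0)=O$ on the generic fiber'' of each component without the $d_i$-dichotomy and coprimality argument you supply.

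There is, however, a genuine gap in your $(\Rightarrow)$ direction. When both $[m_i](0,0)$ are finite affine points you write ``whence $d_1(y_0), d_2(y_0) \neq 0$'', but this is the \emph{converse} of your stated observation and does not follow from it: your observation only yields $[m_i](0,0)=O \Rightarrow d_i(y_0)=0$. If $n_i$ and $d_i$ both vanish at $(b_0,c_0)$ the rational function $n_i/d_i$ is indeterminate there, yet the section $[m_i]P$, being a morphism, can still land in the affine part of the fiber. The paper handles this missing case directly: if $[m_1](0,0)$ is finite and $d_1(b_0,c_0)=0$, then necessarily $n_1(b_0,c_0)=0$ too (else the rational map $(n_1:d_1)\colon Y\to\mathbb{P}^1$ is defined at $y_0$ with value $\infty$, forcing $[m_1](0,0)=O$), and then $f_N(b_0,c_0)=0\cdot d_2(b_0,c_0)-n_2(b_0,c_0)\cdot 0=0$. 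With that one extra case your argument is complete. Incidentally, your own remark that $[m_2-m_1]P\neq O$ on every Kubert curve already rules out the ``both equal $O$'' branch of your dichotomy, so the paper in fact shows both $[m_i](0,0)$ are always finite under the hypothesis $[N](0,0)=O$.
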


\begin{proof} By Example \ref{Example1} we must have $N \geq 4$.  \\
Step 1: Suppose $[N](0,0) = O$.  We claim that $[\lceil\frac{N}{2}\rceil-1](0,0)$ and $[\lfloor \frac{N}{2} \rfloor + 1](0,0)$ are finite and have equal $x$-coordinates.  Indeed, if $[\lceil \frac{N}{2} \rceil - 1](0,0) = O$, then $(0,0)$ is $(2-2\lceil \frac{N}{2} \rceil +N)$-torsion, i.e., 
$2$-torsion if $N$ is even and $1$-torsion if $N$ is odd, contradicting Example 1.  A similar argument shows that $[\lfloor \frac{N}{2} \rfloor + 1](0,0)$ is finite.  Moreover, we have 
\[ \left[\left\lceil \frac{N}{2} \right\rceil -1\right](0,0) + \left[\left\lfloor \frac{N}{2} \right\rfloor + 1\right](0,0) = \left[\left\lceil \frac{N}{2} \right\rceil + \left\lfloor \frac{N}{2} \right\rfloor\right](0,0) = 
[N](0,0) = O, \]
so $[\lceil\frac{N}{2}\rceil-1](0,0)) = -[\lfloor \frac{N}{2} \rfloor + 1](0,0)$.  As for any points $P,Q$ on a Weierstrass elliptic curve, 
 $P = \pm Q$ if and only if $x(P) = x(Q)$, this establishes the claim.  Now, 
since $x([\lceil \frac{N}{2} \rceil - 1](0,0)) = x( [\lfloor \frac{N}{2} \rfloor + 1 ](0,0)) \in \overline \QQ$, if $d_1(b_0,c_0) = 0$ then also $n_1(b_0,c_0) = 0$ 
hence $f_N(b_0,c_0) = n_1(b_0,c_0)d_2(b_0,c_0) - n_2(b_0,c_0)d_2(b_0,c_0) = 0$.  Similarly if 
$d_2(b_0,c_0) = 0$.  Finally, if $d_1(b_0,c_0) d_2(b_0,c_0) \neq 0$, then 
\[ x(\left[\left\lceil \frac{N}{2} \right\rceil - 1\right](0,0)) = \frac{n_1(b_0,c_0)}{d_1(b_0,c_0)} = 
\frac{n_2(b_0,c_0)}{d_2(b_0,c_0)} = x( \left[\left\lfloor \frac{N}{2} \right\rfloor + 1 \right](0,0)), \]
so $f_N(b_0,c_0) = 0$.  \\
Step 2: Suppose that $f_N(b_0,c_0) = 0$.  Thus there must be at least one irreducible factor $g(b,c)$ of $f_N(b,c)$ such that $g(b_0,c_0) = 0$.  Then 
$Z = \Q[b,c]/\langle g(b,c) \rangle$ is an irreducible curve, and let $Y$ be obtained from $Z$ by removing the finite set of closed points on which the Kubert curve $E(b_0,c_0)$ becomes singular.  Then $E(b,c)$ 
gives a relative elliptic curve over $Y$, and the elliptic curve $E(b_0,c_0)$ 
is its specialization at the closed point $(b_0,c_0)$.  By construction, $[N](0,0) = O$ on the generic fiber of $Y$, so by Lemma \ref{SPECIALIZATION}, 
$[N](0,0) = O$ on $E(b_0,c_0)$.  
\end{proof}

\begin{example}\label{Example2} We will make use of the explicit formulas of Example \ref{Example1}.  \\
a) Let $N = 4$.  Setting $x((0,0)) = x([3](0,0))$ gives $f_4(b,c) = c$.  \\
b) Let $N = 5$.  The condition that $(0,0)$ is a $5$-torsion point is $b-c = 0$.  
Setting $x(([2](0,0)) = x([3](0,0))$ gives $f_5(b,c) = b-c$. \\
c) Let $N = 7$.  The condition that $(0,0)$ is a $7$-torsion point is  $b^2-bc-c^3 = 0$.  Setting $x([3](0,0)) = x([4](0,0))$ gives $f_7(b,c) = b(b-c) - c(c^2)$.
\end{example}





As Examples \ref{Example1} and \ref{Example2} illustrate, the complexity of the rational functions giving the coordinates of $[N](0,0)$ increases rapidly with $N$.  Our trick of computing $[\lceil \frac{N}{2} \rfloor -1](0,0)$ and $[\lfloor \frac{N}{2} \rfloor + 1](0,0)$ instead becomes a critical one to extend the range of 
our calculations.  

In general, $f_N(b,c) = 0$ is not the defining equation for $Y_1(N)$ as $[N]P = 0$ implies only that $P$ has order $d$ for some $d|N$.  The polynomial $f_N(b,c)$ will have as irreducible factors defining equations for 
$Y_1(d)$ for $d \mid N$, $d > 3$.   However a simple Moebius inversion will furnish such an equation. Although we do not explicitly write down equations for $Y_1(N)$ in our algorithm, one could do so with relative ease. A more sophisticated version of this computation has been undertaken by Andrew Sutherland \cite{Sutherland}.

\medskip
\begin{example} We computed $4(0,0)$ and $2(0,0)$ as part of our above examples, so $f_6(b,c) = b^2 - bc - bc^2$. The divisors of $6$ are $1,2,3$ and $6$. Thus by Moebius inversion, the equation for $Y_1(6)$ in the $(b,c)$-plane is $b-c-c^2$, a smooth plane curve. For higher $N$, $Y_1(N)$ is not naturally a plane curve -- e.g. the genus of $Y_1(N)$ will usually not be of the form 
$\frac{(d-1)(d-2)}{2}$ -- and so there will often be singularities in this plane model. \end{example}

We note in general that if $d \ge 3$ and $d\mid N$ then $f_d \mid f_N$. This is easy to see using the group law when $d\ge 4$. For $d=3$ we can see this by computing on the elliptic curve $E(b,c)$ over the function field ${\QQ(b,c)}$. Namely, if $(x,y)$ is any nonidentity point it is possible to compute that $[3](x,y) \pm (0,0)$ has $x$-coordinate with $b$-adic valuation 1. Therefore $b$ but not $b^2$ divides $f_N(b,c)$ when $3\mid N$. Therefore performing a Moebius inversion on the $f_N$ furnishes a factorization $f_N = \prod_{\stackrel{d \mid N}{d \ge 3}} \phi_d$ where if $N\ge 4$ then $\phi_N(b,c)$ is a defining equation for $Y_1(N)$ in the $(b,c)$-plane.

\subsection{Complex multiplication and bounds on $j$-invariants} If $E(b,c)$ is a CM elliptic curve defined over a number field of degree $d$, then its $j$-invariant $j(b,c)$ must lie in a number field of degree dividing $d$. The degree of $\QQ(j(b,c))$ is equal to the class number of End $E$, which is an order in an imaginary quadratic field. 

\begin{theorem} (Heilbronn, 1934) \cite{Heilbronn} For any positive integer $d$, there are only finitely many imaginary quadratic fields with class number $d$. \end{theorem}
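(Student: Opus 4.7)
The plan is to reduce the theorem, via Dirichlet's analytic class number formula, to a lower bound for an $L$-value. For the imaginary quadratic field $K$ of discriminant $-D$ with $D > 4$ and associated primitive real Dirichlet character $\chi_{-D}$, one has
\[
h(K) = \frac{\sqrt{D}}{\pi}\, L(1, \chi_{-D}).
\]
If I can show that $L(1, \chi_{-D}) \sqrt{D} \to \infty$ as $D \to \infty$, then for any fixed $d$ only finitely many discriminants $-D$ can satisfy $h(K) \le d$, proving the theorem.

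I would follow Heilbronn's original dichotomy based on whether the Generalized Riemann Hypothesis holds for real primitive Dirichlet $L$-functions. In the easy branch, if GRH holds, standard estimates (Littlewood) give $L(1, \chi_{-D}) \gg 1/\log\log D$, so $h(K) \gg \sqrt{D}/\log\log D \to \infty$ with no further work. In the opposite branch, there must exist some real primitive character $\chi_1$ for which $L(s, \chi_1)$ has a real zero $\beta$ with $1 - \beta$ arbitrarily small. For each real primitive $\chi_2 = \chi_{-D}$ one then forms the product
\[
F(s) = \zeta(s) L(s, \chi_1) L(s, \chi_2) L(s, \chi_1 \chi_2),
\]
which is the Dedekind zeta function of the biquadratic field cut out by $\chi_1$ and $\chi_2$. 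In particular $F(s)$ has a Dirichlet series with non-negative coefficients and constant term $1$.

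The main obstacle lies in this second branch: one must extract a usable quantitative lower bound on $L(1, \chi_{-D})$ from the coefficient positivity of $F(s)$ combined with the forced vanishing $F(\beta) = 0$. A standard Perron-type contour argument, comparing $F$ to an auxiliary majorant Dirichlet series, yields an inequality of the shape
\[
L(1, \chi_{-D}) \gg (1-\beta)\, D^{-c(1-\beta)}
\]
for some absolute constant $c > 0$. Because $\beta$ may be chosen with $1-\beta$ as small as we like, this implies $L(1, \chi_{-D}) \gg_\epsilon D^{-\epsilon}$ for every $\epsilon > 0$, whence $h(K) \gg_\epsilon D^{1/2 - \epsilon} \to \infty$ in this case as well. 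Either way $h(K) \to \infty$ with $D$, and the theorem follows at once.
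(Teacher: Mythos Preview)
The paper does not actually prove this theorem: it states Heilbronn's result and cites his 1934 paper \cite{Heilbronn} as a black box, then moves on to a corollary about orders. So there is no ``paper's own proof'' to compare against; your task amounted to supplying a proof where the authors simply invoked the literature.

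Your sketch captures the correct analytic strategy --- reduce to showing $h(-D)\to\infty$ via the class number formula and a lower bound on $L(1,\chi_{-D})$ --- and the use of the auxiliary product $\zeta(s)L(s,\chi_1)L(s,\chi_2)L(s,\chi_1\chi_2)$ with nonnegative Dirichlet coefficients is the right engine. One point deserves tightening: the dichotomy you describe is not quite Heilbronn's. The failure of GRH does not by itself produce a real zero $\beta$ arbitrarily close to $1$; it only gives some zero off the critical line. Heilbronn's actual split (building on Deuring and Mordell) is between ``all relevant $L$-functions satisfy RH'' and ``some fixed $L$-function has a zero $\rho$ with $\operatorname{Re}\rho>1/2$,'' and in the second branch that single off-line zero (not necessarily near $s=1$) is fed into the positivity argument to force $h(-D)\to\infty$. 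What you have written is closer in spirit to the later Siegel--Landau argument, where the dichotomy is on the existence of an exceptional real zero near $1$; that route also works and in fact yields the stronger ineffective bound $h(-D)\gg_\epsilon D^{1/2-\epsilon}$. Either way the conclusion follows, but you should be clear about which argument you are running, since as stated your second branch does not follow from the negation of your first.
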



\begin{corollary} For any positive integer $d$, there are only finitely many imaginary quadratic orders $\OO$ such that $h(\OO) \le d$. \end{corollary}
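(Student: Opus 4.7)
The plan is to reduce the statement to Heilbronn's theorem by using the classical conductor formula for class numbers of orders. Every imaginary quadratic order $\OO$ sits inside a unique imaginary quadratic field $K$ as the order of some conductor $f \ge 1$ in the maximal order $\OO_K$. The relevant tool is the formula
\[
h(\OO) = \frac{h(\OO_K) \cdot f}{[\OO_K^\times : \OO^\times]} \prod_{p \mid f}\left(1 - \left(\frac{d_K}{p}\right)\frac{1}{p}\right),
\]
which in particular makes it clear that $h(\OO_K) \mid h(\OO)$ (up to the small unit factor, which is bounded by $6$ and becomes trivial as soon as $f \ge 2$ and $d_K < -4$).

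First I would observe that if $h(\OO) \le d$ then $h(\OO_K) \le Cd$ for an absolute constant $C$ (coming from the unit index), so $h(\OO_K)$ is also bounded. By Heilbronn's theorem applied to each class number value $h \le Cd$, the set of possible fields $K$ is finite. Next, for any one such fixed $K$, I would use the formula above to argue that $h(\OO) \to \infty$ as $f \to \infty$: each Euler factor $1 - \left(\frac{d_K}{p}\right)\frac{1}{p}$ is bounded below by $1/2$, so $h(\OO) \ge \frac{h(\OO_K)\, f}{6} \cdot 2^{-\omega(f)}$, which grows without bound in $f$ (since $2^{\omega(f)} = o(f^{\varepsilon})$ for any $\varepsilon > 0$). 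Hence only finitely many conductors $f$ are compatible with the bound $h(\OO) \le d$.

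Combining the two finiteness statements, only finitely many pairs $(K,f)$ (equivalently, only finitely many orders $\OO$) satisfy $h(\OO) \le d$.

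The only genuine obstacle is having the conductor formula at hand; everything else is an estimate. If one wants to avoid invoking the formula, an alternative route is to use the fact that $|\disc(\OO)| = f^2 |d_K|$ and the Brauer--Siegel-style lower bound $h(\OO) \gg_\varepsilon |\disc(\OO)|^{1/2-\varepsilon}$ for imaginary quadratic orders, which directly forces $|\disc(\OO)|$ (and therefore the order itself up to finite ambiguity) to be bounded.
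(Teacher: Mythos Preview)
Your proposal is correct and follows essentially the same route as the paper: the paper's proof is a one-line appeal to ``Gauss's class number formula'' (the conductor formula you wrote down) together with the structure $\OO = \ZZ + f\OO_K$, and you have simply spelled out the two finiteness steps (Heilbronn for the field, growth in $f$ for the conductor) that the paper leaves implicit.
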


\begin{proof} Since every quadratic order $\OO$ must be of the form $\ZZ + f\OO_K$, this follows from Gauss's class number formula \cite[Thm 7.24]{Cox}. 
\end{proof} 

\medskip

\begin{example}\label{full7}: We find the least possible degrees for an elliptic curve over a number field $K$ with $7$-torsion and $j$-invariant 0. If we have such a curve $E$, we can find a pair $(b,c)\in K^2$ such that $E \cong E(b,c)$.  Since $j(b,c) = 0$, we have 
\begin{equation}\label{jzeq}
16b^2+8b(1-c)(c+2)+(1-c)^4 = 0,
\end{equation}

and since $(0,0)$ is a nontrivial $7$-torsion point, we have

\begin{equation}\label{f7eq}
b^2-bc-c^3 = 0.
\end{equation}

The real solutions to Equation \ref{jzeq} in the $(b,c)$ affine plane may be seen in Figure \ref{jzerofig}, and Equation \ref{f7eq} in Figure \ref{f7fig}.

\begin{figure}
\centering
\includegraphics[width = \textwidth]{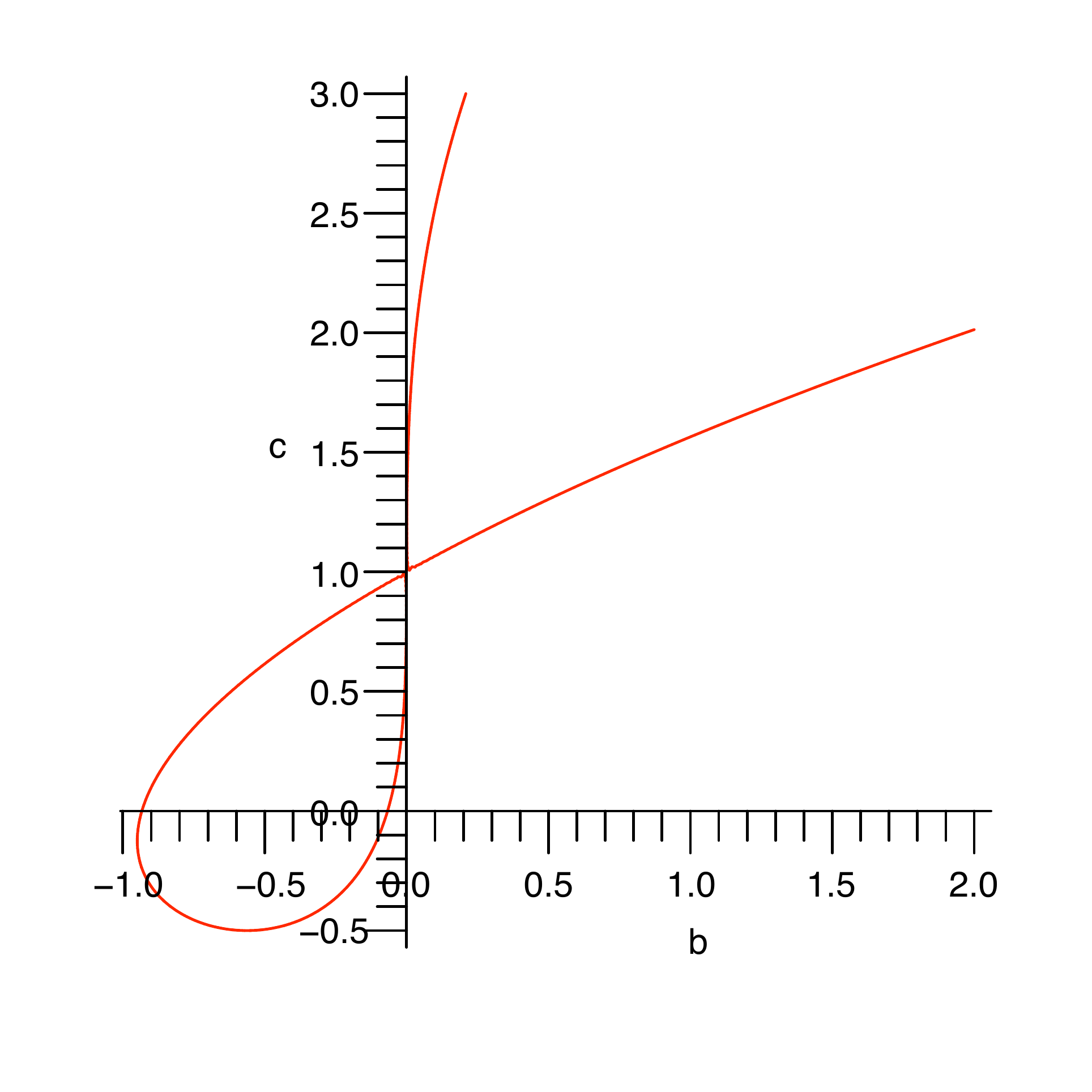}
\caption{The real $(b,c)$ such that $E(b,c)$ has $j$-invariant 0.}\label{jzerofig}
\end{figure}

\begin{figure}
\centering
\includegraphics[width = \textwidth]{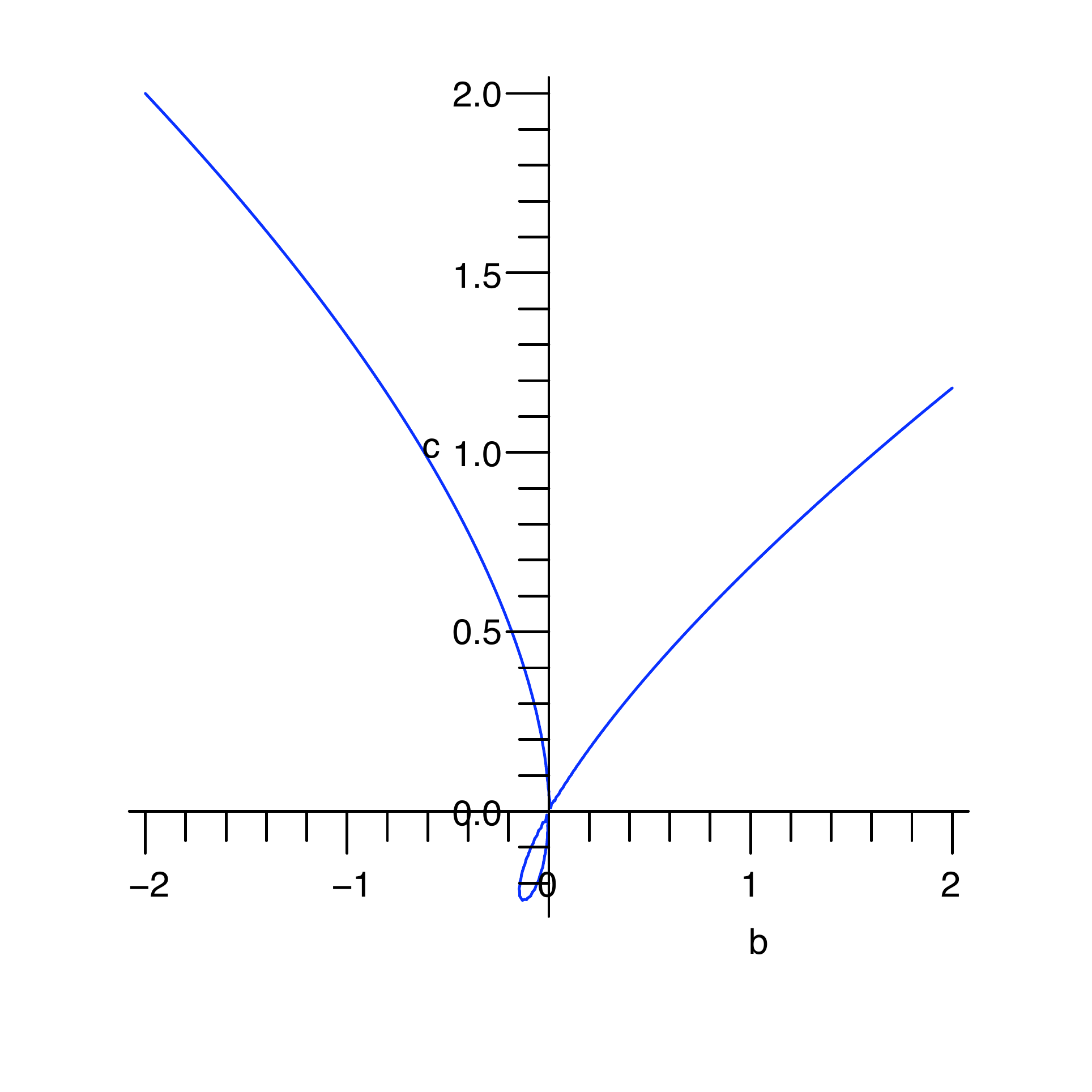}
\caption{The real $(b,c)$ such that $(0,0)$ is a $7$-torsion point on $E(b,c)$.}\label{f7fig}
\end{figure}

The resultant of these two polynomials with respect to $c$ is
\[
(b^2 + b + 1)(b^6 - 325b^5 + 5518b^4 + 3655 b^3 + 718 b^2 + 51 b + 1).
\]
The roots of this \textit{Kubert resultant} identify the intersection points of our two affine curves, as shown in Figure \ref{overlayfig}. We should note here that the first irreducible factor has no real roots. Instead, the $b$-coordinates of the intersection points we see are four of the six real roots of the second factor. In any case, looking at the first irreducible factor over $\QQ$, we see that we can take $b = \zeta_3$. 

\begin{figure}
\centering
\includegraphics[width = \textwidth]{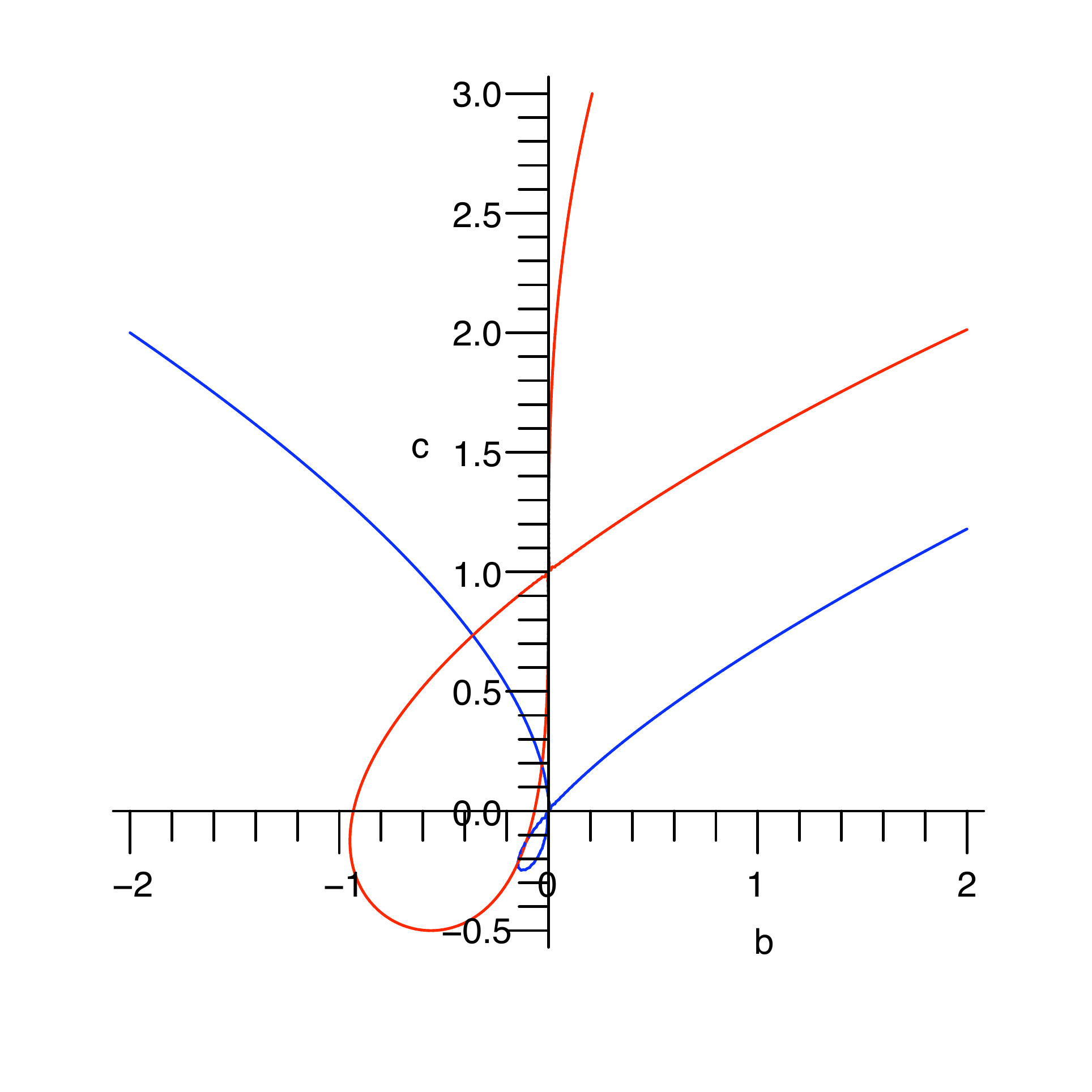}
\caption{The real $(b,c)$ such that $(0,0)$ is a $7$-torsion point on $E(b,c)$ with $j$-invariant 0.}\label{overlayfig}
\end{figure}

We plug in $\zeta_3$ for $b$ in the above polynomials and compute the greatest common divisor, which is $c+1$. So the elliptic curve $E(\zeta_3,-1)$ has a $7$-torsion point over $\QQ(\zeta_3)$. That is, on the curve
\[
y^2 + 2xy - \zeta_3 y = x^3 - \zeta_3 x^2,
\]
the point $(0,0)$ is a $7$-torsion point.\footnote{The reader who prefers standard Weierstrass models may verify that the origin corresponds to the $7$-torsion point $(12(1-\zeta_3), -108 \zeta_3)$ on the isomorphic elliptic curve 
$y^2 = x^3 - (1296\zeta_3 + 6480)$.}
Moreover, this curve acquires full $7$-torsion over the degree-12 cyclotomic field $\QQ(\zeta_{21})$.  
\end{example}

We generalize the above construction as follows: Writing $j(b,c) = \dfrac{n_j(b,c)}{d_j(b,c)}$ as the quotient of two polynomials, we see that there is an elliptic curve $E(b,c)$ with $j$-invariant $j_0$ and $[N](0,0) =O$ if and only if $(b,c)$ satisfy the equations
\begin{align}
n_j(b,c) &= j_0 d_j(b,c) \label{curves} \\
f_N(b,c) &= 0 .\notag
\end{align}

If there are only finitely many pairs $(j_0,N)$ that we have to check, then since the resultant of these equations with respect to $c$ is a one-variable polynomial in $b$, there are only finitely many elliptic curves $E(b,c)$ over a small-degree number field with $j$-invariant $j_0$ and with $(0,0)$ an $N$-torsion point. To determine if $\ZZ/N\ZZ \oplus \ZZ/n\ZZ$ with $n \mid N$ is a torsion subgroup of an elliptic curve over a small-degree number field, we need only check the $n$-th {\em division polynomial} \cite[Exercise 3.7]{Silverman} to see if $E(b,c)$ acquires an additional $n$-torsion point over a small-degree number field. There are finitely many $n\mid N$ and for each such $n$, there are algorithms to compute the $n$-th division polynomial.

In this way, we see how a rough algorithm for enumerating torsion subgroups of CM elliptic curves presents itself. Fix a degree $d$, so that we aim to tabulate CM torsion subgroups over number fields of degree $d$. By Heilbronn's theorem, there are only finitely many $j$-invariants of elliptic curves with Complex Multiplication over all number fields of degree at most $d$. By Merel's bound, we have only finitely many possible torsion subgroups to check. Since there are only finitely many $j_0$ and $N$, the procedure described above terminates for each $d$.

We note here that Merel's bound is quite large and often impractical. We mention it only to note that the above procedure terminates for any finite number of $j$-invariants, CM or not. In the CM case, we have much better bounds to consider.

\subsection{Possible torsion of CM elliptic curves} Let $E$ be an elliptic curve over a number field $F$ with CM. If $E(F)$ contains an $N$-torsion point, then the size of $N$ is severely restricted by the degree of $F$; the following theorems of Silverberg and Prasad-Yogananda can be used to give an explicit upper bound on $N$.

\begin{theorem}\label{SPYBounds} (Silverberg, Prasad-Yogananda) Let $E$ be an elliptic curve over a number field $F$ of degree $d$, and suppose that $E$ has CM by the order $\OO$ in the imaginary quadratic field $K$. Let $e$ be the exponent of the torsion subgroup of $E(F)$. Then 

(a) $\varphi(e) \le w(\OO) d$.

(b) If $K \subseteq F$, then $\varphi(e) \le w(\OO) d/2$.

(c) If $K \nsubseteq F$, then $\varphi(\# E(F)[\tors]) \le w(\OO) d$.
\end{theorem}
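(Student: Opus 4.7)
The plan is to analyze the CM Galois representation on $E[e]$ together with the Weil pairing, and to reduce the three parts to one another via compositum arguments. Let $P\in E(F)$ have exact order $e$.

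I would begin with part (b) under the assumption $K\subseteq F$. Then every $\alpha\in\OO$ gives an $F$-rational endomorphism of $E$, so $E[e]$ is a free module of rank one over $\OO/e\OO$ and the mod-$e$ Galois representation $\rho_e\colon G_F\to\operatorname{Aut}(E[e])$ factors through $(\OO/e\OO)^{\times}$. Writing $P=\lambda T$ for an $\OO/e\OO$-basis $T$, the $F$-rationality of $P$ restricts $\rho_e(G_F)$ to the stabilizer $S_\lambda$ of $\lambda$. The Weil pairing is Galois equivariant and satisfies $\langle\alpha R,S\rangle=\langle R,\bar\alpha S\rangle$ on a CM curve, which identifies the induced action on $\wedge^2 E[e]\cong\mu_e$ with the norm $N_{K/\QQ}\colon(\OO/e\OO)^{\times}\to(\ZZ/e\ZZ)^{\times}$. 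The main input from complex multiplication theory is that the image $\rho_e(G_F)$ has small index in $S_\lambda$, controlled by the image of $\OO^{\times}$ in $(\OO/e\OO)^{\times}$: $F$-rationality of $P$ only pins down the Galois image up to this unit contribution. Combining these ingredients yields a lower bound on $[F(\mu_e):F]$, which rearranges, via $\varphi(e)=[\QQ(\mu_e):\QQ]\le d\cdot[F(\mu_e):F]$, into $\varphi(e)\le w(\OO)d/2$; the factor of $2$ reflects the equality $N_{K/\QQ}(u)=N_{K/\QQ}(\bar u)$, which identifies $u$ with $\bar u$ in the norm image of $\OO^{\times}$.

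Part (a) follows from (b) by passing to the compositum $L:=FK$, of degree at most $2d$ over $\QQ$. Since $K\subseteq L$ and $E(F)[\tors]\subseteq E(L)[\tors]$, the exponent of $E(L)[\tors]$ is a multiple of $e$; applying (b) to $E/L$ gives $\varphi(e)\le w(\OO)(2d)/2=w(\OO)d$. For part (c), assume $K\not\subseteq F$. The nontrivial $\sigma\in\operatorname{Gal}(FK/F)$ conjugates the $\OO$-action on $E$ by complex conjugation, so if $E[n]\subseteq E(F)$ for some $n\ge 2$, then $(\alpha-\bar\alpha)E[n]=0$ for all $\alpha\in\OO$. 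Taking $\alpha$ to be a $\ZZ$-basis generator of $\OO$, this forces $\alpha-\bar\alpha\in n\OO$, which bounds $n$ by an invariant of $\OO$ (controlled by the conductor of $\OO$ in $\OO_K$). The resulting bound on the non-cyclic part of $E(F)[\tors]$, combined with part (a) applied to the exponent $e$, yields $\varphi(\#E(F)[\tors])\le w(\OO)d$.

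The main obstacle is the CM-theoretic step in (b): obtaining the precise factor $w(\OO)/2$ requires pinning down the image of $\rho_e$ inside $S_\lambda$, which in full generality relies on the main theorem of complex multiplication (relating $F(E[e])$ to a ray class field of $K$ of conductor $e\OO$). Short of invoking that machinery, one carries out an explicit Weil pairing calculation using the points $uP$ for $u\in\OO^{\times}$ and tracks the contribution of each to $F\cap\mu_e$; this becomes delicate when $\lambda$ is a zero-divisor in $\OO/e\OO$, a case in which $\langle P,\alpha P\rangle$ vanishes in $\mu_e$ for every $\alpha\in\OO$ and one must work with auxiliary points $Q\not\in\OO\cdot P$ defined over controlled extensions of $F$.
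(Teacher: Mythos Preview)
The paper does not actually prove this theorem: its entire ``proof'' is the sentence ``See \cite{Silverberg}, \cite{PY},'' together with the remark that $w(\OO)$ may be sharpened to $w(\OO)/h(\OO)$. So any comparison is really between your sketch and the arguments of Silverberg and of Prasad--Yogananda, not with anything in the paper itself.

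Your reduction of (a) to (b) via the compositum $L=FK$ is correct and standard. Your outline of (b) is also in the right spirit---over $F\supseteq K$ the mod-$e$ representation lands in $(\OO/e\OO)^{\times}$, and the Weil pairing transports this to the cyclotomic side via the norm---and you are candid that pinning down the sharp constant ultimately leans on the main theorem of complex multiplication. That is essentially how the cited references proceed (through the associated Hecke character), so your self-assessment of (b) is accurate.

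Part (c), however, has a genuine gap that you have not flagged. Your observation that $K\not\subseteq F$ and $E[n]\subseteq E(F)$ force $(\alpha-\bar\alpha)\in n\OO$ for every $\alpha\in\OO$ is correct, and it bounds $n$ in terms of the \emph{discriminant} of $\OO$. But this does not combine with (a) to give the stated conclusion. Writing $E(F)[\tors]\cong\ZZ/N\ZZ\oplus\ZZ/n\ZZ$ with $n\mid N$, one has $\varphi(\#E(F)[\tors])=\varphi(Nn)=n\,\varphi(N)$; part (a) supplies only $\varphi(N)\le w(\OO)d$, so together with your bound you obtain $n\,\varphi(N)\le \sqrt{|\operatorname{disc}(\OO)|}\cdot w(\OO)d$, which is far too weak and involves the wrong invariant of $\OO$. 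To salvage (c) you must argue over $L=FK$ and exploit the $\OO$-module structure of $E(L)[\tors]$ more fully---for instance, showing that $Nn$ divides the exponent of $E(L)[\tors]$ (so that (b) over $L$ gives $\varphi(Nn)\le w(\OO)\cdot(2d)/2$), or, as in the cited references, passing through the Hecke character attached to $E_{/L}$. Your discriminant bound on $n$ is a true statement, but it is not the ingredient that closes the argument.
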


\begin{proof} See \cite{Silverberg}, \cite{PY}. It can be deduced from Silverberg's work that all above occurrences of $w(\OO)$ may be replaced with $w(\OO)/h(\OO)$.\end{proof}

\smallskip

We will refer henceforth to the bounds obtained from the above theorem and proof as the {\em SPY bounds}. Using merely the bound of part (a) and the well-known inequality $\sqrt{N} \le \phi(N)$ for $N\ge 7$, we see that we need only consider values of $N$ that are at most $w(\OO)^2d^2$. The SPY bounds also lead us to expect that the largest torsion subgroups occur when $w(\OO)$ is largest, namely when $j=0,1728$.

Any bound on the exponent above gives a bound on the size of the torsion subgroup. If the exponent of $E(F)[\tors]$ is at most $N$, since $E(\mathbf{C})[\tors] \cong (\QQ/\ZZ)^2$ \cite[Corollary V.1.1]{Silverman}, we have $\# E(F)[\tors] \le N^2$. In fact, there exist integers $n\mid N$ such that $E(F)[\tors]\cong \ZZ/N\ZZ \oplus \ZZ/n\ZZ$. Moreover in that case, the Weil Pairing \cite[\S III.8]{Silverman} shows that $F \supset \QQ(\zeta_n)$ and thus $\varphi(n) \mid [F:\QQ] =d$.

In the case that $E$ has CM by $\OO$, note that $j(E) \in F$ so that $\QQ(j(E)) \subset F$ and thus $h(\OO) \mid [F:\QQ] = d$. Therefore, let $\deg = d/h(\OO)$. The strengthening of the SPY bounds as noted in the proof of Theorem \ref{SPYBounds} implies that if $e$ is the exponent of $E(F)[\tors]$ then $\varphi(e) \le w(\OO)\deg$. Note also that if $\deg$ is odd, $K\not\subset F$ since $K$ and $\QQ(j(E))$ are linearly disjoint. 

If $\deg =2$ then we may assume that $j \ne 0,1728$ because the possible groups in that case have already been determined \cite{Zimmer1}. Thus $w(\OO) =2$, hence either $E(F)[\tors]$ is among the 12 possible torsion subgroups $G$ such that $\varphi(\# G) \le 4$ or $F$ is the compositum of $\QQ(j(E))$ with $K$, otherwise known as the ring class field of $\OO$. In the latter case, we have the following.

\begin{theorem}(Parish) Let $\OO$ be an imaginary quadratic order, $j$ the $j$-invariant of an elliptic curve with CM by $\OO$, $L = \QQ(j)$ and $H$ the ring class field of $\OO$. Then if $E$ is an elliptic curve defined over $H$ with CM by $\OO$ then $E(K)[\tors]$ contains only points of order $1,2,3,4,$ or $6$. Moreover, if $E$ is defined over $L$ then $E(L)[\tors]$ can only be isomorphic to one of $0,\ZZ/2\ZZ,\ZZ/3\ZZ,\ZZ/4\ZZ,\ZZ/6\ZZ,$ or $\ZZ/2\ZZ \oplus \ZZ/2\ZZ$.  \end{theorem}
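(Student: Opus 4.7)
The plan is to derive both assertions directly from the strengthened Silverberg--Prasad--Yogananda bounds (Theorem~\ref{SPYBounds}), in which each occurrence of $w(\OO)$ may be replaced by $w(\OO)/h(\OO)$. Throughout I take as in force the tacit hypothesis $w(\OO)=2$ from the preceding reduction (the cases $j=0,1728$ having been treated separately via \cite{Zimmer1}).

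For the first assertion, take $F = H$. Then $K \subseteq H$ and $[H : \QQ] = 2h(\OO)$, so the strengthened form of Theorem~\ref{SPYBounds}(b) yields
\[
\varphi(e) \;\le\; \frac{w(\OO)}{h(\OO)} \cdot \frac{[H:\QQ]}{2} \;=\; w(\OO) \;=\; 2.
\]
The positive integers $e$ with $\varphi(e) \le 2$ are exactly $e \in \{1,2,3,4,6\}$, which is the claimed list of orders for points in $E(H)[\tors]$.

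For the second assertion, take $F = L = \QQ(j)$, of degree $h(\OO)$ over $\QQ$. I would split on whether $K \subseteq L$. If $K \not\subseteq L$ (the generic situation, and indeed automatic when $\OO$ is maximal since then $L$ is totally real while $K$ is imaginary), the strengthened Theorem~\ref{SPYBounds}(c) forces $\varphi(\#G) \le w(\OO) = 2$, so $\#G \in \{1,2,3,4,6\}$; a brief enumeration of the abelian groups $\ZZ/N\ZZ \oplus \ZZ/n\ZZ$ with $n \mid N$ and $Nn$ in this set produces exactly the six groups listed. If instead $K \subseteq L$, the strengthened Theorem~\ref{SPYBounds}(b) gives $\varphi(e) \le w(\OO)/2 = 1$, so $G$ is killed by $2$ and is one of $0,\ \ZZ/2\ZZ,\ \ZZ/2\ZZ \oplus \ZZ/2\ZZ$, still in the list.

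I do not foresee a substantial technical obstacle, since the argument is essentially a mechanical enumeration powered by the SPY bounds. The one step that genuinely deserves care is the justification of the strengthened form of Theorem~\ref{SPYBounds} in which $w(\OO)$ is replaced by $w(\OO)/h(\OO)$; this must be extracted from Silverberg's original work as noted in the paper's commentary following Theorem~\ref{SPYBounds}. I also note that the $w(\OO)=2$ hypothesis is essential to the first assertion: Example~\ref{full7} already exhibits a $7$-torsion point over $H = \QQ(\zeta_3)$ on a curve with CM by $\ZZ[\zeta_3]$, so any attempt to prove the first part without this hypothesis would have to contend with that example.
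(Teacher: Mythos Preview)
The paper gives no proof of its own here, simply citing \cite[\S VI]{Parish}. Your derivation via the strengthened SPY bounds is therefore a genuinely different route, and an economical one: both assertions collapse to the elementary enumeration $\{n : \varphi(n) \le 2\} = \{1,2,3,4,6\}$ once the strengthened bounds are applied with $F=H$ (part (b)) and $F=L$ (part (c)) respectively.

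Your argument is correct under the $w(\OO)=2$ hypothesis you explicitly adopt. Two small remarks. First, the branch $K \subseteq L$ in your treatment of the second assertion is vacuous: as the paper observes shortly after Theorem~\ref{SPYBounds}, $K$ and $\QQ(j)$ are always linearly disjoint over $\QQ$, so part (c) applies unconditionally and you may drop that case. Second, your observation that Example~\ref{full7} refutes the first assertion for $\OO = \ZZ[\zeta_3]$ is well taken; the theorem is being invoked only in the $w(\OO)=2$ context set up in the paragraph immediately preceding it, and your reading of that tacit hypothesis is the right one. (For the excluded orders $h(\OO)=1$, so $L=\QQ$ and the second assertion is Olson's list \cite{Olson} rather than \cite{Zimmer1}.) What your approach buys is near self-containment within the paper, modulo extracting the $w(\OO)/h(\OO)$ strengthening from \cite{Silverberg}; the citation to Parish buys an argument independent of the SPY machinery.
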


\begin{proof} \cite[\S VI]{Parish}. \end{proof}

Much finer information is available in Parish's paper. Except for $j= 0$ and $\ZZ/3\ZZ\oplus\ZZ/3\ZZ$, each torsion subgroup $G$ which is possible over a ring class field has $\varphi(\#G) \le 4$. Note that as a further consequence, if $E$ is an elliptic curve with CM by $\OO$ over a number field $F$ and $[F:\QQ] = h(\OO)$, then the only possible torsion subgroups are those found in degree 1. By the Weil pairing, if $E(F)[\tors] \cong \ZZ/N\ZZ \oplus \ZZ/n\ZZ$ with $n\mid N$ then $\varphi(n) \mid d$. In most cases though, $\varphi(n)\mid \deg$. We can however determine exactly the intersection of $\QQ(\zeta_n)$ with $\QQ(j(\OO))$ and thus get closer to the ideal that $\varphi(n) \mid \deg$. 

Let $H$ denote the ring class field of $\OO$ and $G$ the maximal abelian sub-extension of $H$ over $\QQ$, which is necessarily multi-quadratic \cite[\S 6]{Cox}. Hence $G'$, the intersection of $\QQ(j(\OO))$ with $G$, must also be multiquadratic. Since any abelian extension must be contained in some $\QQ(\zeta_m)$ \cite[Theorem 8.8]{Cox}, the intersection of $\QQ(\zeta_n)$ with $\QQ(j(\OO))$ must be contained in $G'$. This $G'$ may be numerically determined via discriminants, but it is not computationally difficult to simply list the discriminants of the quadratic subfields of $\QQ(j(\OO))$, which are all necessarily real. If $\Delta$ is a discriminant of a real quadratic field $K$, then $K \subset \QQ(\zeta_n)$ if and only if $\Delta \mid n$ \cite[Example V.3.11]{Milne}. Finally, we may determine inductively that if $M$ is a multi-quadratic field extension of degree $2^m$, then the number of quadratic sub-extensions is $2^m -1$.

\begin{function}\label{DegQJCyc} (\textsf{CyclotomicIntersectionDegree}) Let $\OO$ be an imaginary quadratic order and $n$ a positive integer.
\begin{enumerate}
\item Let $L = \{ \disc(K) : [K:\QQ]=2, K \subset \QQ(j(\OO))\}$, the discriminants of the quadratic subfields of the multi-quadratic field $G'$ above.
\item Let $M = \{ D :  D \in L, D \mid n\}$, the discriminants of the quadratic subfields of $G' \cap \QQ(\zeta_n)$.
\item Return \# $M + 1$.
\end{enumerate}
\end{function}

These steps restrict the groups which could possibly occur as torsion subgroups of an elliptic curve with CM by $\OO$. We combine these steps into a function, which takes as input an imaginary quadratic order $\OO$, a degree $d$, and a list of integers $N$ which could be the exponent of a torsion subgroup of an elliptic curve $E$ over a number field $F$ with CM by $\OO$. The output of this function is a list of finite abelian groups $G$ such that $E(F)[\tors] \cong G$ for an elliptic curve $E$ with CM by $\OO$.

\begin{function}\label{PossGroups} (\textsf{PossibleGroups}) Let $d\in \ZZ_{>1}$, $\OO$ an imaginary quadratic order such that $h(\OO) \mid d$, and $L$ a list of positive integers $N$.
\begin{enumerate}
\item Set $L' = \{ \ZZ/N\ZZ \oplus \ZZ/n\ZZ : N \in L, n \mid N\},$ $h = h(\OO)$, and $\deg = \dfrac{d}{h}$.
\item If $\deg =1$ then remove $\ZZ/N\ZZ \oplus \ZZ/n\ZZ$ from $L'$ unless $n=N =2$ or $n=1$ and $N \in \{1,2,3,4,6\}$.
\item If $\deg =2$ then remove $\ZZ/N\ZZ \oplus \ZZ/n\ZZ$ from $L'$ unless either ($\OO \cong \ZZ[\zeta_3]$ and $(N,n) = (3,3)$) or $\varphi(Nn) \le 4$.
\item If $\deg >1$ is odd then remove $\ZZ/N\ZZ \oplus \ZZ/n\ZZ$ from $L'$ unless $\varphi(Nn) \le w(\OO)\deg$ and $\varphi(n) \mid  d$.
\item If $\deg >2$ is even then remove $\ZZ/N\ZZ \oplus \ZZ/n\ZZ$ from $L'$ unless $\varphi(n) \mid \deg \times\textsf{CyclotomicIntersectionDegree}(\OO,n)$ (Function \ref{DegQJCyc}).
\item Return $L'$.
\end{enumerate}
\end{function}

In Function \ref{PossGroups}, you will of course get the best results when the list $L$ is made up of integers $N$ which can be an order of a torsion point on an elliptic curve $E$ with CM by $\OO$. Necessarily then, $\varphi(N) \le \dfrac{w(\OO) d}{h(\OO)} = w(\OO)\deg$ by the SPY bounds. We also have another tool for ruling out possible orders of torsion.

\begin{theorem}\label{CCSBounds} Let $\OO$ be an imaginary quadratic order of discriminant $D$ and let $D_0$ be the discriminant of the field $K =\QQ(\sqrt D)$, so that $D = f^2 D_0$. If $p \nmid D$ is an odd prime then let $\left(\dfrac{\cdot}{p}\right)$ denote the Legendre symbol at $p$. If $E$ is an elliptic curve over a number field of degree $d$ with CM by $\OO$ with a point of order $p$ then we have the following.
\begin{itemize}
\item If $\left(\dfrac{D}{p}\right) = 1$ then $(p-1)h(\OO_K) \mid 2dw(\OO_K)$.
\item If $\left(\dfrac{D}{p}\right) = -1$ then $(p^2-1)h(\OO_K) \mid 2dw(\OO_K)$.
\end{itemize}
\end{theorem}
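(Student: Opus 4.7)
My plan is to translate the existence of a rational $p$-torsion point into a bound on the image of the mod-$p$ Galois representation attached to $E$, then convert this into a degree statement via the classical theory of complex multiplication.

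First, I would enlarge $F$ to $F' = FK$, noting that $[F':F] \le 2$ and hence $[F':\QQ] \le 2d$, with $K \subseteq F'$. Since $j(E) \in F$ and the ring class field of $\OO$ equals $K(j(E))$, we have $H_\OO \subseteq F'$, so the full endomorphism ring $\OO$ is defined over $F'$. Consequently the mod-$p$ representation $\bar\rho : G_{F'} \to \operatorname{Aut}(E[p])$ commutes with the $\OO$-action and lands in the image $T$ of $(\OO/p\OO)^\ast$ in $\operatorname{Aut}(E[p])$. Because $p \nmid D = f^2 D_0$, we have $p \nmid f$, so $\OO/p\OO \cong \OO_K/p\OO_K$ is étale over $\mathbb{F}_p$: it is $\mathbb{F}_p \times \mathbb{F}_p$ when $(D/p)=1$ (split Cartan, of order $(p-1)^2$) and $\mathbb{F}_{p^2}$ when $(D/p)=-1$ (non-split Cartan, of order $p^2-1$).

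Next, I would use the rational point $P \in E(F)[p] \subseteq E(F')[p]$ to restrict $\bar\rho(G_{F'})$ to the stabilizer of the line $\langle P\rangle$ in $T$. In the non-split case, multiplication by $\lambda \in \mathbb{F}_{p^2}^\ast$ fixes $P \neq 0$ only when $\lambda = 1$, so $\bar\rho(G_{F'})$ is trivial and $E[p] \subseteq E(F')$. In the split case, writing $p\OO_K = \mathfrak{p}\bar{\mathfrak{p}}$, the line $\langle P \rangle$ is an eigenspace, say $E[\mathfrak{p}]$, and the character $\psi_{\mathfrak{p}} : G_{F'} \to (\OO_K/\mathfrak{p})^\ast$ is trivial, giving $E[\mathfrak{p}] \subseteq E(F')$.

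The main theoretical input now is the classical CM description of the fields of $\mathfrak{m}$-torsion: up to a twist that is killed by passage to $F'$, the extension $K(E[\mathfrak{p}])/K$ is the ray class field of $K$ of conductor $\mathfrak{p}$, and $K(E[p])/K$ (in the non-split case) is the ray class field of conductor $p\OO_K$. By class field theory these have Galois groups over $K$ of orders $(p-1)h(\OO_K)/w(\OO_K)$ and $(p^2-1)h(\OO_K)/w(\OO_K)$ respectively. Since these fields sit inside $F'$ and $[F':K] \le d$, the corresponding orders must divide $d$, which after clearing $w(\OO_K)$ and absorbing a factor of $2$ yields the two claimed divisibilities.

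The main obstacle is the CM-theoretic step, because $E$ is only specified up to twist and the endomorphism order is $\OO$ rather than $\OO_K$: one must carefully justify that the twist contributes at worst the factor of $2$ already built into $2dw(\OO_K)$, and that the local identification $\OO \otimes \mathbb{Z}_p = \OO_K \otimes \mathbb{Z}_p$ (valid since $p \nmid f$) allows the ring-class-field analysis for $\OO$ to be replaced by the ray-class-field analysis for $\OO_K$ at the prime $p$. The remaining bookkeeping — tracking which of the two characters $\psi_{\mathfrak{p}}, \psi_{\bar{\mathfrak{p}}}$ is trivialized and verifying the divisibility rather than mere inequality — is routine once this identification is in place.
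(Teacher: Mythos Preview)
The paper does not give a self-contained argument here: its proof is a two-line citation to \cite[Theorem~2 and Proposition~25]{CCS}. Your sketch is a faithful reconstruction of the strategy behind that reference: pass to $F' = FK$ so that the mod-$p$ image lands in the (split or non-split) Cartan $(\OO/p\OO)^\ast \cong (\OO_K/p\OO_K)^\ast$, use the rational $p$-torsion point to kill one Cartan character (or both, in the non-split case), and then translate the resulting torsion containment into a divisibility on $[F':K]\mid d$ via the main theorem of complex multiplication. You have also correctly isolated the one genuinely delicate point, namely the reduction from the order $\OO$ to the maximal order $\OO_K$ when $p\nmid f$; this is exactly what \cite[Proposition~25]{CCS} supplies (concretely, via the degree-$f$ isogeny $E\to E'$ with $\operatorname{End}(E')=\OO_K$, which is an isomorphism on $p$-torsion since $p\nmid f$).

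One phrasing to tighten: it is not literally true that ``$K(E[\mathfrak p])/K$ is the ray class field of conductor $\mathfrak p$''; twisting can make the torsion field strictly larger. What your argument actually uses, and what is enough, is the chain of containments
\[
K_{\mathfrak p}=K\bigl(j(E'),h(E'[\mathfrak p])\bigr)\ \subseteq\ K\bigl(j(E'),E'[\mathfrak p]\bigr)\ \subseteq\ F',
\]
so that $[K_{\mathfrak p}:K]$ divides $[F':K]\mid d$. Since $[K_{\mathfrak p}:K]\cdot e = (p-1)h(\OO_K)$ (respectively $(p^2-1)h(\OO_K)$) with $e\mid w(\OO_K)$, the stated divisibilities follow. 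With that adjustment your outline is correct and matches the cited source.
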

\begin{proof} This was directly proven for $D = D_0$ \cite[Theorem 2]{CCS}, and can be extended to the case $p\nmid D$ \cite[Proposition 25]{CCS}.\end{proof}

In this way, we can additionally remove large primes from the divisors of possible exponents. Starting from a list of integers up to $w(\OO)\deg$, we can then very quickly sieve out impossible torsion exponents. For $j=0$, performing the above procedure takes $\dfrac{1}{100}$ of one second to find $$[ 2, 3, 4, 5, 6, 7, 8, 9, 10, 12, 13, 14, 15, 16, 18, 20, 21, 24, 26, 28, 30, 36, 42 ]$$ as a list of possible torsion exponents over a number field of degree 2. 

\begin{function}\label{PossibleN} (\textsf{PossibleExponents}) Let $\OO$ be an imaginary quadratic order and let $\deg$ be a positive integer.
\begin{enumerate}
\item Let $L$ be the list of positive integers $N$ such that $\varphi(N) \le w(\OO) \deg$.
\item Let $L'$ be the set of integers $N\in L$ such that if $p\mid N$ is prime then $p$ satisfies the divisibility relations in Theorem \ref{CCSBounds} for $d = h(\OO)\deg.$
\item Return $L'$.
\end{enumerate}
\end{function}

Note however that this list is still far too large a list to use in Function \ref{PossGroups}. We apply a sieve to this list, using resultants as in Example \ref{full7}, where we showed that 7-torsion occurred over a number field of degree 2 for $j=0$. We note especially that the {\em Kubert Degree Sequence} for $j=0$ and $N=7$, or the sequence of degrees of irreducible factors of the resultant, is $[2,6]$. On the other hand, the Degree Sequence for $j=0$ and $N = 14$ is $[6,18]$. Therefore we may eliminate $14,28$, and $42$ from our list of possible torsion exponents because 14-torsion is not possible for $j=0$ over a number field of degree not divisible by 6. Computing this Degree Sequence takes $0.03$ seconds. If we recursively perform this sieve, it takes $0.24$ seconds to find that the torsion exponents which occur for $j=0$ over a number field of degree 2 are $$[ 2, 3, 4, 6, 7 ].$$ This may seem like a relatively short amount of time to be worried about, but for $j=0$ and a number field 
of degree 6 it takes $69.95$ seconds to find $[ 2, 3, 4, 6, 7, 9, 14, 19 ]$ as the list of torsion exponents. For degree 12 it takes over an hour. We describe this process, along with the adjustment we have to make for $\OO$ with larger class numbers in the following function.

\begin{function}\label{FullResSieve} (\textsf{SievedTorsion}) Let $\OO$ be an imaginary quadratic order and let $\deg$ be a positive integer.
\begin{enumerate}
\item Let $L = \textsf{PossibleExponents}(\OO,\deg)$. (Function \ref{PossibleN})
\item For $N \in \textsf{PossibleExponents}(\OO,\deg)$ such that $N \in L$ and $N \ge 4$:
\begin{itemize}
\item Let $DegSeq$ be the sequence of integers $Degree(f)h(\OO)$ where $f$ is an irreducible factor of the resultant corresponding to $N$-torsion on elliptic curves with CM by $\OO$.
\item Unless $m \mid h(\OO)\deg$ for some $m \in DegSeq$, remove all multiples of $N$ from $L$.
\end{itemize}
\item Return $L$.
\end{enumerate}
\end{function}

We structure our computation this way to minimize the number of times that we need to compute multivariate resultants. While straightforward and much quicker than computing torsion subgroups of elliptic curves, the computation of multivariate resultants is \textsf{NP}-Hard \cite{Res}. The memory demands for computing resultants over large degree number fields can also be quite substantial. All told, the longest computation of torsion subgroups occurred in degree 12. Computing the lists of possible torsion subgroups of CM elliptic curves over a number field of degree 12 for each possible quadratic order $\OO$ using the above procedure took over 10 hours.

\section{Ruling out Torsion Subgroups of Elliptic Curves}

Suppose we are given a finite group $G \cong \ZZ/N\ZZ \oplus \ZZ/n\ZZ$ and we want to test whether it could be a torsion subgroup of an elliptic curve $E$ over a number field $F$ of degree dividing $d$ with CM by an imaginary quadratic order $\OO$. If there is such an elliptic curve such that $E(F)[\tors]\cong G$, then we can find $b,c\in F$ such that $E \cong E(b,c)$, where the point $(0,0)$ is a point of order $N$. Conversely if we have $b,c\in F$ such that $E(b,c)$ has CM by $\OO$ and $(0,0)$ is a point of order $N$, it is not necessarily the case that $E(b,c)(F)[\tors]\cong G$. The first and easiest way for this to fail is if $E(b,c)(F)[\tors] \supsetneq G$.

\begin{example} The resultant whose roots are the $b$ such that $(0,0)$ is a 5-torsion point on $E(b,c)$ with CM by $\ZZ[\zeta_4]$ is $$(x^2 +1)^2(x^4 - 18x^3 + 74x^2 + 18x + 1)^2.$$ However, any elliptic curve over a number field $F$ with CM by $\ZZ[\zeta_4]$ has a rational 2-torsion point for trivial reasons\footnote{For instance, put your elliptic curve in short Weierstrass form.}. Therefore if we search for $\ZZ/5\ZZ$ as a torsion subgroup over a degree 2 field, we find $\ZZ/10\ZZ$ as the torsion subgroup of $E(\zeta_4,\zeta_4)$.\end{example}

It of course may also happen that $G\subsetneq E(F)[\tors]$ and that they have the same exponent. The more typical situation is that $E(F)[\tors] \subset G$. In that case, we have to check to see if there is an extension field $L$ of $F$ of degree still dividing $d$ such that $E(L)[\tors] \cong G$.

To rule this out, there are many options. Of course, we may compute all elliptic curves with CM by $\OO$ and with an $N$-torsion point using the Kubert resultant method of Example \ref{full7}, base extend each of these elliptic curves by roots of their $n$-th division polynomials and then compute the torsion subgroups of all those elliptic curves. For running time reasons, it is preferable to rule this out before ever computing an elliptic curve or especially a torsion subgroup. Although there are many ways to compute a torsion subgroup of an elliptic curve over a number field, almost all of them involve reducing an elliptic curve modulo various primes in order to take advantage of Schoof's algorithm \cite{Schoof}. Unfortunately, irreducible factors of Kubert resultants often have non-integral coefficients, making this process slow and un-supported in some computer algebra systems. 
Even for computer algebra systems like \texttt{magma v2-18.3} with robust support for elliptic curves over number fields 
given by non-integral polynomials, it can be very time- and memory-consuming to compute torsion subgroups over large degree number fields.

A crucial step is thus a variant of Step (5) of Function \ref{PossGroups}. If $\ZZ/N\ZZ \oplus \ZZ/n\ZZ \cong E(L)[\tors]$ with $n\mid N$ for some field extension $L$ of $F$, then we must have $\QQ(\zeta_n) \subset L$. 
Numerically we have done almost everything to numerically rule out the possibility that there is some field $L$ of degree dividing $d$ which contains both $F$ and $\QQ(\zeta_n)$. 
Now that we have computed $F$ explicitly via the Kubert resultant, we can compute the compositum of $F$ with $\QQ(\zeta_n)$ and its degree over $\QQ$. 
If this degree does not divide $d$, then we can not base extend $F$ to $L$ and obtain $E(L)[\tors] \cong G$. Moreover, we have ruled $G$ out without computing any torsion on $E$.

\begin{example} Let $\OO = \ZZ\left[\dfrac{1 + 3\sqrt{-11}}{2}\right]$, let $d=12$, and let $G \cong \ZZ/9\ZZ \oplus \ZZ/9\ZZ$. Since $h(\OO) =2$, the Kubert Degree Sequence for $\OO$ and $N = 9$ is $[6,12,54]$. An elliptic curve with CM by $\OO$ over the number field defined by the first irreducible factor or a degree two extension thereof cannot have torsion subgroup $G$ by a quick standard computation. Just computing the torsion subgroup over the number field $F$ given by the second irreducible factor ran for several days before quitting due to a lack of memory. While we could not rule out $G$ as a torsion subgroup without computing with $F$, we found that the compositum of $F$ with $\QQ(\zeta_9)$ has degree 36 over $\QQ$ and therefore $G$ is not a torsion subgroup of an elliptic curve with CM by $\OO$ over a number field of degree 12.
\end{example}

We now describe the procedure for saying that a group $G$ which could have been produced by Functions \ref{PossibleN} and \ref{PossGroups} in fact cannot appear as the torsion subgroup of an elliptic curve $E$ with CM by $\OO$ over a number field $L$ of degree dividing $d$. We describe this procedure as a function which either returns \textsf{True} if $G$ can be ruled out or \textsf{False} if $G$ can occur, along with an elliptic curve $E$ over a number field $L$ of degree dividing $d$.

\begin{function}\label{RuledOut}(\textsf{RuledOut})
Let $G \cong \ZZ/N\ZZ \oplus \ZZ/n\ZZ$, let $d$ be a positive integer and let $\OO$ be an imaginary quadratic order.
\begin{enumerate}
\item Compute the Kubert Resultant whose roots are the $b\in \overline\QQ$ such that $E(b,c)$ has CM by $\OO$ and $(0,0)$ is a point of exact order $N$. Factor it as $\displaystyle \prod_{i=1}^g f_i$.
\item\label{RuledOutBeginning} If $Degree(f_i)h(\OO) \mid d$ then let $F_i$ denote the number field given by $f_i$, generated over $\QQ(j(\OO))$ by $b_i$. Let $c_i$ be the element of $F_i$ (or possibly an extension) such that $E(b_i,c_i)$ is our CM elliptic curve. Compute the compositum of $F_i$ with $\QQ(\zeta_n)$ and let $d_i$ be its degree over $\QQ$.
\item\label{RuledOutError} If $c_i \not\in F_i$ then raise an error.
\item If $d_i \mid d$ then let $T_i$ be the torsion subgroup of $E(b_i,c_i)$. If $T_i \cong G$ then Return \textsf{False}, $E(b_i,c_i)_{F_i}$.
\item If $[F_i:\QQ] \ne d$ then it may be possible to base extend $E(b_i,c_i)$ to obtain $G$ as a torsion subgroup. 
\item If $T_i$ is a subgroup of $G$ with the same exponent then $T_i \cong \ZZ/N\ZZ \oplus \ZZ/n'\ZZ$ where $n' \mid n \mid N$. Compute the $n$-th division polynomial of $E(b_i,c_i)$, perform Moebius inversion to obtain a polynomial whose roots are $x$-coordinates of points of exact order $n$, and factor that polynomial as $\displaystyle\prod_{j=1}^m p_j$.
\item If $Degree(p_j) \mid \dfrac{d}{[F_i:\QQ]}$ then let $L_{i,j}$ be the number field given by $p_j$, generated over $F_i$ by the $x$-coordinate $a_j$. Let $g = y^2 + (a_j(1-c_i) - b_i)y + (b_ia_j^2 - a_j^3)$, the polynomial whose roots in $\overline \QQ$ are the $y$-coordinates of the points on $E(b_i,c_i)$ with $x$-coordinate $a_j$. Let $n_g$ be the number of irreducible factors over $L_{i,j}$ of $g$ and let $e_j = Degree(p_j)\dfrac{2}{n_g}$.
\item If $e_j \ne 1$ and $e_j \mid \dfrac{d}{[F_i:\QQ]}$ then let $M_{i,j}$ be the field given by the polynomial $g$. Let $T_{i,j}$ be the torsion subgroup of the base change of $E(b_i,c_i)$ to $M_{i,j}$.
\item\label{RuledOutEnding} If $T_{i,j} \cong G$ then Return \textsf{False}, $E(b_i,c_i)_{M_{i,j}}$.
\item If for all possible $i$ and $j$ there is some ``If $\ldots$''' statement which begins one of Steps (\ref{RuledOutBeginning})-(\ref{RuledOutEnding}) besides Step \ref{RuledOutError} which is false, then Return \textsf{True}.
\end{enumerate}
\end{function}

We note that in Step \ref{RuledOut}(\ref{RuledOutError}), it is possible that $c_i \not \in F_i$ and thus we must have an error-raising statement. However, as one may intuit from Figure \ref{overlayfig}, the probability that two intersection points in the $(b,c)$-plane have the same $b$ value is zero by the properties of the Zariski topology. We now give an algorithm which produces all torsion subgroups of elliptic curves with CM over a number field of degree $d$.

\begin{algorithm}\label{FinalAlg}
Let $d$ be a positive integer and $L$ a list of finite groups which we know to be torsion subgroups of CM elliptic curves over some number field of degree dividing $d$.
\begin{enumerate}
\item Create an associative array or dictionary $A$, indexed by imaginary quadratic orders $\OO$ such that $h(\OO)\mid d$ and either $h(\OO) =1$ or $h(\OO) \ne d$. Let the $\OO$-th entry of $A$ be $$\mathsf{PossibleGroups}\left(d,\OO,\mathsf{SievedTorsion}\left(\OO, \dfrac{d}{h(\OO)}\right)\right).$$
\item\label{GroupsToBeRuledOut} Let $P$ be the union of all the sets $A(\OO)$ and let $R$ be $P - L$, the set of groups in $P$ which are not isomorphic to any element of $L$.
\item Iterate over $G\in R$.
\begin{itemize}
\item If $\mathsf{RuledOut}(G,d,\OO)$ returns \textsf{True} for all $\OO$ such that $G\in A(\OO)$, move onto the next group.
\item If not, append $G$ to $L$ and go to Step (\ref{GroupsToBeRuledOut}).
\end{itemize}
\end{enumerate}
\end{algorithm}

When Algorithm \ref{FinalAlg} is completed, $L$ is the complete list of possible torsion subgroups. If $d=2$, then Algorithm \ref{FinalAlg} takes 0.87 seconds to complete when starting with the list given by Zimmer, M\"uller and Stroher, and rules out only the group $\ZZ/5\ZZ$. If $d=12$, then if we start from Step (\ref{GroupsToBeRuledOut}) with a complete list $L$, the algorithm takes only 3.5 hours to complete for a total time of roughly 14 hours. Complete records of the ruling out computation may be found on \url{stankewicz.net/torsion}.

\section{Isomorphism classes of Torsion Subgroups of CM elliptic curves $E$}\label{lists}

\subsection{$K= \QQ$}

$$E(\QQ)[\tors] \in \{0,\ZZ/2\ZZ, \ZZ/3\ZZ, \ZZ/4\ZZ, \ZZ/6\ZZ, \ZZ/2\ZZ \oplus \ZZ/2\ZZ\}.$$

Examples of these are : 

\begin{center}
\begin{tabular}{ccc} 
Group & Elliptic Curve & $j$-invariant \\
0 & $y^2 = x^3 +2$ & 0 \\
$\ZZ/2\ZZ$  & $y^2 = x^3 -1$ & 0 \\
$\ZZ/3\ZZ$  & $y^2 = x^3 + 16$ & 0 \\
$\ZZ/4\ZZ$  & $y^2 = x^3 + 4x$ & 1728\\
$\ZZ/6\ZZ$  & $y^2 = x^3 + 1$ & 0 \\
$\ZZ/2\ZZ\oplus\ZZ/2\ZZ$  & $y^2 = x^3 - 4x$ & 1728 \end{tabular}\end{center}

\subsection{$K$ is a number field of degree 2.}

$$E(K)[\tors] \in \begin{cases} \ZZ/m\ZZ & \textrm{for } m = 1,2,3,4,6,7,10, \\ \ZZ/2\ZZ \oplus \ZZ/m \ZZ & \textrm{for } m=2,4,6, \textrm{ and} \\ \ZZ/3\ZZ\oplus \ZZ/3\ZZ. &\end{cases}$$

The only subgroups which do not occur over $\QQ$ are:

$$E(K)[\tors] \in \{\ZZ/7\ZZ,\ZZ/10\ZZ, \ZZ/2\ZZ \oplus \ZZ/4 \ZZ,\ZZ/2\ZZ \oplus \ZZ/6 \ZZ, \ZZ/3\ZZ\oplus \ZZ/3\ZZ\}$$

Examples of these are:
\begin{center}
\begin{tabular}{cccc} 
Group & Field Extension & Elliptic Curve & $j$-invariant \\
$\ZZ/7\ZZ$ & $\QQ(\zeta_3)$ & $E(\zeta_3,-1)$ & 0\\
$\ZZ/10\ZZ$ & $\QQ(\zeta_4)$ & $E(\zeta_4,\zeta_4)$ & 1728\\
$\ZZ/2\ZZ\oplus\ZZ/4\ZZ$ & $\QQ(\zeta_4)$ & $y^2 = x^3 + 4x$ & 1728\\
$\ZZ/2\ZZ\oplus\ZZ/6\ZZ$ & $\QQ(\zeta_3)$ & $y^2 = x^3 + 1$ & 0\\
$\ZZ/3\ZZ\oplus\ZZ/3\ZZ$ & $\QQ(\zeta_3)$ & $y^2 = x^3 + 16$ & 0 \end{tabular}\end{center}

\subsection{$K$ is a number field of degree 3.}

$$E(K)[\tors] \in \begin{cases} \ZZ/m\ZZ & \textrm{for } m = 1,2,3,4,6,9,14, \\ \textrm{and }&\ZZ/2\ZZ \oplus \ZZ/2 \ZZ.  \end{cases}$$

The only subgroups which do not occur over $\QQ$ are:

$$E(K)[\tors] \in \{ \ZZ/9\ZZ, \ZZ/14\ZZ \}.$$

Examples of these are:
\begin{center}
\begin{tabular}{cccc} 
Group & Defining Polynomial & Elliptic Curve & $j$-invariant \\ \hline
$\ZZ/9\ZZ$ & $b^3 - 99b^2 - 90b - 9$ & $E\left(b,\displaystyle\frac{-2b^2 + 318b - 75}{753}\right)$ & 0 \\
$\ZZ/14\ZZ$ & $b^3 + 5b^2 + 2/7b - 1/49$ & $E\left(b,\displaystyle\frac{133b^2 + 749b + 54}{167}\right)$ & -3375 \end{tabular}\end{center}

\subsection{$K$ is a number field of degree 4.}

$$E(K)[\tors]\in \begin{cases} \ZZ/m\ZZ & \textrm{for } m = 1,\dots,8,10,\\ &\hspace{1.5cm} 12,13,21 \\ \ZZ/2\ZZ \oplus \ZZ/m \ZZ & \textrm{for } m=2,4,6,8,10,\\ \ZZ/3\ZZ\oplus \ZZ/m\ZZ & \textrm{for } m=3,6,\\ \textrm{and } &\ZZ/4\ZZ\oplus \ZZ/4\ZZ. \end{cases}$$

The only subgroups which do not occur over $\QQ$ or a number field of degree 2 are:

$$E(K)[\tors]\in \left\{\begin{array}{c}\ZZ/5\ZZ, \ZZ/8\ZZ, \ZZ/12\ZZ,\ZZ/13\ZZ, \ZZ/21\ZZ,\\ \ZZ/2\ZZ\oplus \ZZ/8\ZZ, \ZZ/2\ZZ\oplus \ZZ/10\ZZ, \ZZ/4\ZZ\oplus \ZZ/4\ZZ, \ZZ/3\ZZ\oplus \ZZ/6\ZZ\end{array}\right\}.$$

Examples of these are:

\hspace{-1cm}\begin{tabular}{cccc}
Group & Field & Elliptic Curve & $j$ \\ \hline
$\ZZ/5\ZZ$ & $\dfrac{\QQ[b]}{(b^4 - 4b^3 + 46b^2 + 4b + 1)}$ & $E(b,b)$ & -32768 \\
$\ZZ/8\ZZ$ & $\dfrac{\QQ[b]}{(b^4 + 2b^3 + b^2 - b - \dfrac{1}{8})}$ & $E\left(b,\displaystyle\frac{8b^3 + 36b^2 + 46b + 3}{13}\right)$ \footnotemark  & 1728\\
$\ZZ/12\ZZ$ & $\dfrac{\QQ[b]}{(b^4 - 10b^3 + 24b^2 - 16b - 2)}$ & $E\left(b,\displaystyle\frac{-6b^3 + 52b^2 - 70b - 9}{7}\right)$ & 0\\
$\ZZ/13\ZZ$ & $\dfrac{\QQ[b]}{(b^4 + 4b^3 + 78b^2 + 13b + 1)}$ & $E\left(b,\displaystyle\frac{16b^3 + 44b^2 + 1354b + 45}{483}\right)$ & 0\\
$\ZZ/21\ZZ$ & $\dfrac{\QQ[e]}{(e^{4} - e^{3} + 2 e + 1)}$ & 
$y^2 = x^3 - \left(\begin{array}{c}371952 e^{3} + \\3373488 e^{2} + \\  3777840 e + \\ 1228608\end{array}\right)$
& 0\\
$\left(\dfrac{\ZZ/2\ZZ\oplus}{\ZZ/8\ZZ}\right)$ & $\dfrac{\QQ[b]}{(b^4 - 4b^3 +  4b^2 - b - 1/8)}$ & $E\left(b,32b^3 - 108b^2 + 58b + 6\right)$ & 287496\\
$\left(\dfrac{\ZZ/2\ZZ\oplus}{\ZZ/10\ZZ}\right)$ & $\dfrac{\QQ(\zeta_4)[x]}{(x^2 - \zeta_4x -\dfrac{\zeta_4}{2})}$ & $E(\zeta_4,\zeta_4)$ & 1728 \\
$\left(\dfrac{\ZZ/4\ZZ\oplus}{\ZZ/4\ZZ}\right)$ & $\dfrac{\QQ[x]}{(x^4 + 1)}$ & $E(-1/8,0)$ & 1728 \\
$\left(\dfrac{\ZZ/3\ZZ\oplus}{\ZZ/6\ZZ}\right)$ & $\QQ(\sqrt 3, \sqrt{-3})$ & $E\left(\dfrac{6\sqrt 3 + 10}{3},\dfrac{2\sqrt 3 + 3}{3}\right)$ & 1728
\end{tabular}
\footnotetext{Although $j(\ZZ[2i]) = 287496$ and $j(\ZZ[i]) = 1728$ and so it would be reasonable to expect two curves over number fields of the same degree with those $j$-invariants and respective torsion subgroups $\ZZ/2\ZZ \oplus \ZZ/8\ZZ$ and $\ZZ/8\ZZ$ to be isogenous, this is not the case. Indeed the two fields are not isomorphic.}

\subsection{$K$ is a number field of degree 5.}

$$E(K)[\tors]\in \{0,\ZZ/2\ZZ ,\ZZ/3\ZZ,\ZZ/4\ZZ,\ZZ/6\ZZ,\ZZ/11\ZZ, \ZZ/2\ZZ \oplus \ZZ/2 \ZZ \}$$

The only subgroup which does not occur over $\QQ$ is $\ZZ/11\ZZ$, which occurs over the maximal real subfield of $\QQ(\zeta_{11})$. This occurs for $j = -32768$ in $E(b,c)$ with the following quantities. Hereon, unless otherwise stated, elliptic curves will be given by the values of $b$ and $c$.

\begin{tabular}{ccc}
 Field Extension & $b$ &$c$ \\ \hline
 $\dfrac{\QQ[e]}{(e^{5} - e^{4} - 4 e^{3} + 3 e^{2} + 3 e - 1)}$ & $-7 e^{4} - 2 e^{3} + 16 e^{2} - e - 1$ & $2 e^{3} - 4 e + 1$ 
\end{tabular}

\subsection{$K$ is a number field of degree 6.}

$$E(K)[\tors]\in \begin{cases} \ZZ/m\ZZ & \textrm{for } m = 1,2,3,4,6,7,9,10,\\ & \hspace{1.5cm}14,18,19,26, \\ \ZZ/2\ZZ \oplus \ZZ/m \ZZ & \textrm{for } m=2,4,6,14,\\ \ZZ/3\ZZ\oplus \ZZ/m\ZZ & \textrm{for } m = 3,6,9,\\ \textrm{and } &\ZZ/6\ZZ \oplus \ZZ/6\ZZ. \end{cases}$$

The only subgroups which do not occur over $\QQ$ or a number field of degree 2 or 3 are:

$$E(K)[\tors]\in \left\{\begin{array}{c}\ZZ/18\ZZ,\ZZ/19\ZZ,\ZZ/26\ZZ \\ \ZZ/2\ZZ\oplus \ZZ/14\ZZ, \ZZ/3\ZZ\oplus \ZZ/6\ZZ,\ZZ/3\ZZ\oplus \ZZ/9\ZZ, \ZZ/6\ZZ\oplus\ZZ/6\ZZ \end{array} \right\}.$$

Examples of these are:


$\begin{array}{l|l}
Group & \ZZ/18\ZZ \\ \hline
j, Field & 8000,\QQ[e]/(e^{6} - 2 e^{5} + 3 e^{4} - 2 e^{3} + 2 e^{2} + 1) \\ \hline
b & \frac{1}{9}\left(\begin{array}{c}28 e^{5} - 79 e^{4} + 86 e^{3} - 30 e^{2} + 11 e - 31
\end{array}\right) \\ \hline 
c & \frac{1}{3}\left(\begin{array}{c}-8 e^{5} + 7 e^{4} - e^{3} - 8 e^{2} - 9 e - 5
\end{array}\right)
\end{array}$


$\begin{array}{l|l}
Group & \ZZ/19\ZZ \\ \hline
j,Field & 0,\QQ[e]/(e^{6} + e^{4} - e^{3} - 2 e^{2} + e + 1) \\ \hline
b & 2 e^{5} - e^{4} + 2 e^{3} - 4 e^{2} + 2 \\ \hline 
c & 2 e^{5} - 2 e^{4} + 4 e^{3} - 4 e^{2} - 2 e + 3
\end{array}$


$\begin{array}{l|l}
Group & \ZZ/26\ZZ \\ \hline
j, Field & 1728,\QQ[e]/(e^{6} - e^{4} + 2 e^{3} - 2 e + 1) \\ \hline
b & \frac{1}{13}\left(\begin{array}{c}53 e^{5} + 6 e^{4} - 70 e^{3} + 75 e^{2} + 5 e - 136
\end{array}\right) \\ \hline 
c & \frac{1}{13}\left(\begin{array}{c}-49 e^{5} - 44 e^{4} + 14 e^{3} - 78 e^{2} - 65 e + 46
\end{array}\right)
\end{array}$

\begin{tabular}{cccc} 
 Group & Field Extension & Elliptic Curve & $j$ \\
$\left(\dfrac{\ZZ/2\ZZ\oplus}{\ZZ/14\ZZ}\right)$ & $\dfrac{\QQ(\sqrt{-7})[b]}{(b^3 + 5b^2 + 2/7b - 1/49)}$ & $E\left(b,\displaystyle\frac{133b^2 + 749b + 54}{167}\right)$ & -3375 \\
$\left(\dfrac{\ZZ/3\ZZ\oplus}{\ZZ/6\ZZ}\right)$ & $\QQ(\zeta_3,\sqrt[3]{-16})$ & $y^2 = x^3 + 16$ & 0\\
$\left(\dfrac{\ZZ/6\ZZ \oplus}{\ZZ/6\ZZ}\right)$ & $\QQ(\zeta_3,\sqrt[3]{4})$ & $y^2 = x^3 + 1$ & 0\\
$\left(\dfrac{\ZZ/3\ZZ\oplus}{\ZZ/9\ZZ}\right)$ & $\dfrac{\QQ(\zeta_3)[b]}{b^3 - 99b^2 - 90b - 9}$ & $E\left(b,\displaystyle\frac{-2b^2 + 318b - 75}{753}\right)$ & 0
\end{tabular}

\subsection{$K$ is a number field of degree 7.}

$$E(K)[\tors]\in \{0,\ZZ/2\ZZ, \ZZ/3\ZZ, \ZZ/4\ZZ, \ZZ/6\ZZ, \ZZ/2\ZZ \oplus \ZZ/2\ZZ\}.$$

No subgroups occur in degree 7 which do not occur over $\QQ$.

\subsection{$K$ is a number field of degree 8.}

$$E(K)[\tors]\in \begin{cases} \ZZ/m\ZZ & \textrm{for } m = 1,\dots,8,10,12,13,\\ &\hspace{1.5cm} 15,16,20,21,28,30,34,39, \\ \ZZ/2\ZZ \oplus \ZZ/m \ZZ & \textrm{for } m=2,4,6,8,10,12,16,20,\\ \ZZ/4\ZZ \oplus \ZZ/m\ZZ & m = 4,8,12, \\ \ZZ/m\ZZ\oplus \ZZ/m\ZZ & \textrm{for } m = 3,5,6, \\\ZZ/m\ZZ\oplus \ZZ/2m\ZZ & \textrm{for } m = 3,5.  \end{cases}$$

The only subgroups which do not occur over $\QQ$ or a number field of degree dividing 8 are:

$$E(K)[\tors]\in \left\{\begin{array}{c}\ZZ/15\ZZ,\ZZ/16\ZZ, \ZZ/20\ZZ,\ZZ/30\ZZ,\ZZ/34\ZZ,\ZZ/39\ZZ \\ \ZZ/2\ZZ\oplus \ZZ/12\ZZ,\ZZ/2\ZZ\oplus \ZZ/16\ZZ, \ZZ/2\ZZ\oplus \ZZ/20\ZZ \\ \ZZ/6\ZZ\oplus \ZZ/6\ZZ,\ZZ/4\ZZ\oplus \ZZ/8\ZZ,\ZZ/4\ZZ\oplus \ZZ/12\ZZ \\ \ZZ/5\ZZ \oplus \ZZ/5\ZZ,\ZZ/5\ZZ\oplus \ZZ/10\ZZ\end{array}\right\}.$$

We give examples of these and hereon, unless otherwise stated, their fields of definition will be given only by the defining polynomial over $\QQ$.

%

$\begin{array}{l|l}
Group & \ZZ/15\ZZ \\ \hline
j, Field & 0,e^{8} - 3 e^{7} - 2 e^{6} + 9 e^{5} - 6 e^{3} - 2 e^{2} - 3 e + 1 = 0 \\ \hline
b & \frac{1}{61}\left(\begin{array}{c}599 e^{7} - 303 e^{6} - 1758 e^{5} + 786 e^{4}\\ + 1411 e^{3} + 632 e^{2} + 755 e - 307
\end{array}\right) \\ \hline 
c & \frac{1}{61}\left(\begin{array}{c}-8 e^{7} + 68 e^{6} + 8 e^{5} - 238 e^{4} + 28 e^{3} + 260 e^{2} + 50 e - 7
\end{array}\right)
\end{array}$


$\begin{array}{l|l}
Group & \ZZ/16\ZZ \\ \hline
j, Field & 16581375,e^{8} - 2 e^{7} + 6 e^{6} - 9 e^{5} + 10 e^{4} - 8 e^{3} + 6 e^{2} - 3 e + 1 = 0 \\ \hline
b & \frac{1}{2}\left(\begin{array}{c}-16 e^{7} + 131 e^{6} - 234 e^{5} + 268 e^{4} - 227 e^{3} + 175 e^{2} - 90 e + 28
\end{array}\right) \\ \hline 
c & \frac{1}{2}\left(\begin{array}{c}-e^{7} + 7 e^{6} - 15 e^{5} + 16 e^{4} - 14 e^{3} + 10 e^{2} - 6 e + 1
\end{array}\right)
\end{array}$


$\begin{array}{l|l}
Group & \ZZ/20\ZZ \\ \hline
j,Field & 287496,e^{8} - 4 e^{7} + 6 e^{6} - 8 e^{4} + 8 e^{3} - 4 e + 2 = 0 \\ \hline
b & 280 e^{7} - 876 e^{6} + 873 e^{5} + 873 e^{4} - 1553 e^{3} + 710 e^{2} + 762 e - 487 \\ \hline 
c & -25 e^{7} + 82 e^{6} - 88 e^{5} - 71 e^{4} + 154 e^{3} - 77 e^{2} - 66 e + 55
\end{array}$


$\begin{array}{l|l}
Group & \ZZ/30\ZZ \\ \hline
j & j^2 + 191025j - 121287375 =0 \\ \hline 
Field & \begin{array}{c}e^{8} - 3 e^{7} - 2 e^{6} + 9 e^{5} - 6 e^{3} - 2 e^{2} - 3 e + 1 = 0\end{array} \\ \hline
b & \dfrac{1}{61}\left(\begin{array}{c}-1926 e^{7} + 7953 e^{6} - 4967 e^{5} - 12311 e^{4}\\ + 13878 e^{3} - 2797 e^{2} + 7188 e - 1853
\end{array}\right) \\ \hline 
c & \dfrac{1}{61}\left(\begin{array}{c}97 e^{7} - 367 e^{6} + 147 e^{5} + 583 e^{4} - 492 e^{3} + 172 e^{2} - 286 e + 62
\end{array}\right)
\end{array}$


$\begin{array}{l|l}
Group & \ZZ/34\ZZ \\ \hline
j, Field & 1728,e^{8} + 4 e^{7} + 7 e^{6} + 8 e^{5} + 8 e^{4} + 6 e^{3} + 4 e^{2} + 2 e + 1 = 0 \\ \hline
b & \frac{1}{17}\left(\begin{array}{c}-11 e^{7} - 34 e^{6} - 38 e^{5} - 16 e^{4} - 11 e^{3} - 11 e^{2} - e + 3
\end{array}\right) \\ \hline 
c & \frac{1}{17}\left(\begin{array}{c}9 e^{7} + 31 e^{6} + 59 e^{5} + 60 e^{4} + 50 e^{3} + 30 e^{2} + 25 e + 6
\end{array}\right)
\end{array}$


$\begin{array}{l|l}
Group & \ZZ/39\ZZ \\ \hline
j,Field & 0,e^{8} - 2 e^{6} - 3 e^{5} + 3 e^{4} + 3 e^{3} - 2 e^{2} + 1 = 0 \\ \hline
b & -12 e^{7} - 25 e^{6} + 7 e^{5} + 82 e^{4} + 77 e^{3} - 47 e^{2} - 95 e - 35 \\ \hline 
c & -8 e^{7} - 4 e^{6} + 18 e^{5} + 34 e^{4} - 14 e^{3} - 46 e^{2} + 13
\end{array}$


$\begin{array}{l|l}
Group & \ZZ/2\ZZ \oplus\ZZ/12\ZZ \\ \hline
j, Field & 54000,e^{8} - 4 e^{7} + 2 e^{6} + 8 e^{5} - 8 e^{4} + 4 e^{3} - 16 e^{2} + 16 e - 2 = 0 \\ \hline
b & \frac{1}{13}\left(\begin{array}{c}-1083 e^{7} + 2865 e^{6} + 1673 e^{5} - 6205 e^{4}\\ - 5 e^{3} - 4167 e^{2} + 11397 e - 1570
\end{array}\right) \\ \hline 
c & \frac{1}{13}\left(\begin{array}{c}-306 e^{7} + 805 e^{6} + 500 e^{5} - 1801 e^{4}\\ + 27 e^{3} - 1218 e^{2} + 3282 e - 453
\end{array}\right)
\end{array}$


$\begin{array}{l|l}
Group & \ZZ/2\ZZ \oplus\ZZ/16\ZZ \\ \hline
j, Field & -3375,e^{8} + 3 e^{7} + 6 e^{6} + 8 e^{5} + 10 e^{4} + 9 e^{3} + 6 e^{2} + 2 e + 1 = 0 \\ \hline
b & \frac{1}{2}\left(\begin{array}{c}20 e^{7} + 47 e^{6} + 79 e^{5} + 96 e^{4} + 110 e^{3} + 89 e^{2} + 26 e + 18
\end{array}\right) \\ \hline 
c & \frac{1}{2}\left(\begin{array}{c}7 e^{7} + 12 e^{6} + 22 e^{5} + 22 e^{4} + 30 e^{3} + 17 e^{2} + 7 e + 3
\end{array}\right)
\end{array}$


$\begin{array}{l|l}
Group & \ZZ/2\ZZ\oplus\ZZ/20\ZZ \\ \hline
j,Field & 1728,e^{8} - 4 e^{7} + 6 e^{6} - 8 e^{4} + 8 e^{3} - 4 e + 2 = 0 \\ \hline
b & -5 e^{7} + 17 e^{6} - 20 e^{5} - 12 e^{4} + 34 e^{3} - 21 e^{2} - 15 e + 13 \\ \hline 
c & -4 e^{7} + 13 e^{6} - 13 e^{5} - 13 e^{4} + 24 e^{3} - 8 e^{2} - 10 e + 7
\end{array}$


$\begin{array}{l|l}
Group & \ZZ/6\ZZ \oplus\ZZ/6\ZZ \\ \hline
j, Field & -3375,e^{8} + 3 e^{7} + 4 e^{6} + 3 e^{5} + 3 e^{4} + 3 e^{3} + 4 e^{2} + 3 e + 1 = 0 \\ \hline
b & \frac{1}{15}\left(132 e^{7} + 513 e^{6} + 681 e^{5} + 267 e^{4} + 143 e^{3} + 629 e^{2} + 602 e + 193
\right) \\ \hline 
c & \frac{1}{15}\left(\begin{array}{c}-21 e^{7} - 39 e^{6} - 3 e^{5} + 29 e^{4} - 39 e^{3} - 32 e^{2} + 9 e + 11
\end{array}\right)
\end{array}$


$\begin{array}{l|l}
Group & \ZZ/4\ZZ \oplus\ZZ/8\ZZ \\ \hline
j, Field & -3375,e^{8} - e^{6} - 2 e^{5} + e^{4} + 8 e^{3} + 12 e^{2} + 8 e + 2 = 0 \\ \hline
b & \frac{1}{11}\left(\begin{array}{c}103 e^{7} - 85 e^{6} - 43 e^{5} - 153 e^{4} + 222 e^{3} + 655 e^{2} + 663 e + 235
\end{array}\right) \\ \hline 
c & \frac{1}{11}\left(\begin{array}{c}63 e^{7} - 27 e^{6} - 64 e^{5} - 86 e^{4} + 114 e^{3} + 452 e^{2} + 534 e + 206
\end{array}\right)
\end{array}$


$\begin{array}{l|l}
Group & \ZZ/4\ZZ \oplus\ZZ/12\ZZ \\ \hline
j, Field & 0,e^{8} - 2 e^{7} + 2 e^{6} - 2 e^{5} + 7 e^{4} - 10 e^{3} + 8 e^{2} - 4 e + 1 = 0 \\ \hline
b & \frac{1}{11}\left(\begin{array}{c}-32 e^{7} + 66 e^{6} - 53 e^{5} + 57 e^{4} - 220 e^{3} + 320 e^{2} - 188 e + 115
\end{array}\right) \\ \hline 
c & \frac{1}{11}\left(\begin{array}{c}-28 e^{7} + 66 e^{6} - 56 e^{5} + 54 e^{4} - 198 e^{3} + 324 e^{2} - 214 e + 91
\end{array}\right)
\end{array}$


$\begin{array}{l|l} \textrm{Group, Field} & \ZZ/5\ZZ\oplus \ZZ/5\ZZ, \QQ(\zeta_{15}) \\ \hline j,b=c & (0,4\zeta_{15}^7 + 2\zeta_{15}^6 - 2\zeta_{15}^5 -2\zeta_{15}^3 + 4\zeta_{15} + 1)\end{array}$

$\begin{array}{l|l} \textrm{Group, Field} & \ZZ/5\ZZ\oplus \ZZ/10\ZZ, \QQ(\zeta_{20}) \\ \hline j,b=c & (1728,\zeta_4)\end{array}$

\subsection{$K$ is a number field of degree 9.}

$$E(K)[\tors]\in \begin{cases} \ZZ/m\ZZ & \textrm{for } m = 1,2,3,4,6,9,14,18,19,27, \\ \textrm{and }& \ZZ/2\ZZ \oplus \ZZ/2 \ZZ. \end{cases}$$

The subgroups which do not occur over $\QQ$ or a number field of degree 3 are:

$$E(K)[\tors]\in \{ \ZZ/18\ZZ,\ZZ/19\ZZ,\ZZ/27\ZZ\}.$$

Examples of these are:


$\begin{array}{l|l}
Group & \ZZ/18\ZZ \\ \hline
j & 54000 \\ \hline 
Field & \begin{array}{c}e^{9} - 3 e^{8} + 3 e^{7} - 6 e^{6} + 12 e^{5} - 3 e^{4} - 15 e^{3} + 15 e^{2} - 6 e + 1 = 0\end{array} \\ \hline
b & \dfrac{1}{3}\left(\begin{array}{c}217 e^{8} - 235 e^{7} + 202 e^{6} - 904 e^{5} + 841 e^{4}\\ + 971 e^{3} - 1364 e^{2} + 617 e - 113
\end{array}\right) \\ \hline 
c & \dfrac{1}{3}\left(\begin{array}{c}7 e^{8} - 8 e^{7} + 7 e^{6} - 31 e^{5} + 32 e^{4} + 29 e^{3} - 50 e^{2} + 25 e - 5
\end{array}\right)
\end{array}$
%

$\begin{array}{l|l}
Group & \ZZ/19\ZZ \\ \hline
j & -884736 \\ \hline 
Field & \begin{array}{c}e^{9} - e^{8} - 8 e^{7} + 7 e^{6} + 21 e^{5} - 15 e^{4} - 20 e^{3} + 10 e^{2} + 5 e - 1 = 0\end{array} \\ \hline
b & -31 e^{8} + 73 e^{7} + 112 e^{6} - 319 e^{5} - 80 e^{4} + 397 e^{3} - 26 e^{2} - 139 e + 22 \\ \hline 
c & \left(\begin{array}{c}-2 e^{8} + 16 e^{6} - 4 e^{5} - 38 e^{4} + 14 e^{3} + 26 e^{2} - 10 e + 1
\end{array}\right)
\end{array}$


$\begin{array}{l|l}
Group\footnotemark & \ZZ/27\ZZ \\ \hline
j & -12288000   \\ \hline 
Field & \begin{array}{c}e^{9} - 9 e^{7} + 27 e^{5} - 30 e^{3} + 9 e - 1 = 0\end{array} \\ \hline
b & \left(\begin{array}{c}-4282 e^{8} - 507 e^{7} + 38492 e^{6} + 4523 e^{5} - 115156 e^{4}\\ - 13456 e^{3} + 126990 e^{2} + 14789 e - 36852
\end{array}\right) \\ \hline 
c & \left(\begin{array}{c}16 e^{8} + 2 e^{7} - 140 e^{6} - 18 e^{5} + 410 e^{4} + 54 e^{3} - 444 e^{2} - 58 e + 125
\end{array}\right)
\end{array}$\footnotetext{The discriminant of the CM order is -27 and the SPY bounds are sharp here. Typically when the SPY bounds are sharp, $\gcd(\disc(\OO),N) >1$.}

\subsection{$K$ is a number field of degree 10.}

$$E(K)[\tors]\in \begin{cases} \ZZ/m\ZZ & \textrm{for } m = 1,2,3,4,6,7,10,11,22,31,50, \\ \ZZ/2\ZZ \oplus \ZZ/m \ZZ & \textrm{for } m = 2,4,6,22, \\ \textrm{and }& \ZZ/3\ZZ\oplus \ZZ/3\ZZ.\end{cases}$$

The only subgroups which do not occur over $\QQ$ or a number field of degree 2 or 5 are:

$$E(K)[\tors]\in \{\ZZ/22\ZZ,\ZZ/31\ZZ,\ZZ/50\ZZ, \ZZ/2\ZZ\oplus \ZZ/22\ZZ\}.$$

Examples of these are:


$\begin{array}{l|l}
Group & \ZZ/22\ZZ \\ \hline
j & 16581375 \\ \hline 
Field & \begin{array}{c}e^{10} - e^{9} + 2 e^{8} - 4 e^{7} + 5 e^{6} - 3 e^{5} + 3 e^{4} - 6 e^{3} + 7 e^{2} - 4 e + 1 = 0\end{array} \\ \hline
b & \left(\begin{array}{c}-184243 e^{9} + 88117 e^{8} - 299927 e^{7} + 589670 e^{6} - 568675 e^{5}\\ + 223462 e^{4} - 395246 e^{3} + 908033 e^{2} - 759722 e + 279034
\end{array}\right) \\ \hline 
c & \left(\begin{array}{c}1905 e^{9} - 643 e^{8} + 3210 e^{7} - 5564 e^{6} + 5493 e^{5}\\ - 1825 e^{4} + 4191 e^{3} - 8721 e^{2} + 7124 e - 2426
\end{array}\right)
\end{array}$


$\begin{array}{l|l}
Group & \ZZ/31\ZZ \\ \hline
j & 0 \\ \hline 
Field & \begin{array}{c}e^{10} + 2 e^{8} - 3 e^{7} + 3 e^{6} - 7 e^{5} + 8 e^{4} - 7 e^{3} + 7 e^{2} - 4 e + 1 = 0\end{array} \\ \hline
b & \left(\begin{array}{c}43 e^{9} - 33 e^{8} + 45 e^{7} - 222 e^{6} + 133 e^{5}\\ - 335 e^{4} + 508 e^{3} - 326 e^{2} + 368 e - 252
\end{array}\right) \\ \hline 
c & \left(\begin{array}{c}24 e^{9} + 10 e^{8} + 50 e^{7} - 54 e^{6} + 44 e^{5}\\ - 148 e^{4} + 130 e^{3} - 104 e^{2} + 122 e - 43
\end{array}\right)
\end{array}$

%

$\begin{array}{l|l}
Group & \ZZ/50\ZZ \\ \hline
j & 1728 \\ \hline 
Field & \begin{array}{c}e^{10} - 4 e^{9} + 9 e^{8} - 14 e^{7} + 15 e^{6} - 10 e^{5} + 3 e^{4} + 2 e^{3} - 2 e^{2} + 1 = 0\end{array} \\ \hline
b & \left(\begin{array}{c}20 e^{8} - 59 e^{7} + 87 e^{6} - 79 e^{5} + 31 e^{4} + 17 e^{3} - 12 e^{2} + 4 e + 8
\end{array}\right) \\ \hline 
c & \left(\begin{array}{c}e^{9} - 6 e^{8} + 10 e^{7} - 10 e^{6} + 7 e^{5} - e^{4} - e^{3} - 2 e^{2} - 2 e - 1
\end{array}\right)
\end{array}$


$\begin{array}{l|l}
Group & \ZZ/2\ZZ\oplus\ZZ/22\ZZ \\ \hline
j & -3375 \\ \hline 
Field & \begin{array}{c}e^{10} - e^{9} + 2 e^{8} - 4 e^{7} + 5 e^{6} - 3 e^{5} + 3 e^{4} - 6 e^{3} + 7 e^{2} - 4 e + 1 = 0\end{array} \\ \hline
b & \left(\begin{array}{c}-14 e^{9} + 4 e^{8} - 22 e^{7} + 41 e^{6} - 35 e^{5}\\ + 9 e^{4} - 30 e^{3} + 63 e^{2} - 46 e + 10
\end{array}\right) \\ \hline 
c & \left(\begin{array}{c}-6 e^{9} + 5 e^{8} - 9 e^{7} + 23 e^{6} - 22 e^{5} + 10 e^{4} - 13 e^{3} + 33 e^{2} - 32 e + 11
\end{array}\right)
\end{array}$

\subsection{$K$ is a number field of degree 11.}

$$E(K)[\tors]\in \{0,\ZZ/2\ZZ, \ZZ/3\ZZ, \ZZ/4\ZZ, \ZZ/6\ZZ, \ZZ/2\ZZ \oplus \ZZ/2\ZZ\}.$$

No subgroups occur in degree 11 which do not occur over $\QQ$.

\subsection{$K$ is a number field of degree 12.}

$$E(K)[\tors]\in \begin{cases} \ZZ/m\ZZ & \textrm{for } m = 1,\dots,10,12,13,14\\ &\hspace{1.5cm} 18,19,21,26,37,42,57, \\ \ZZ/2\ZZ \oplus \ZZ/m \ZZ & \textrm{for } m=2,4,6,8,10,12,14,18,26,28,42,\\ \ZZ/3\ZZ\oplus \ZZ/m\ZZ & \textrm{for } m = 3,6,9,12,18,21,\\ \ZZ/m\ZZ\oplus \ZZ/m\ZZ &\textrm{for } m= 4,6,7.\end{cases}$$

The only subgroups which do not occur over a number field of degree dividing 12 are:

$$E(K)[\tors] \in \begin{cases} \ZZ/m\ZZ & \textrm{for } m =  28, 37, 42, 57, \\\ZZ/2\ZZ \oplus \ZZ/m \ZZ & \textrm{for } m = 12, 18, 26, 28, 42, \\ \ZZ/3\ZZ\oplus \ZZ/m\ZZ & \textrm{for } m = 12, 18,21, \\ \textrm{ and } &\ZZ/7\ZZ\oplus \ZZ/7\ZZ.\end{cases}$$

These are:

%

$\begin{array}{l|l}
Group & \ZZ/28\ZZ \\ \hline
j & 54000 \\ \hline 
Field & \begin{array}{c}e^{12} - 4 e^{11} + 8 e^{10} - 6 e^{9} - 7 e^{8} + 20 e^{7}\\ - 18 e^{6} - 4 e^{5} + 25 e^{4} - 8 e^{3} - 6 e^{2} + 2 e + 1 = 0\end{array} \\ \hline
b & \dfrac{1}{402}\left(\begin{array}{c}110 e^{11} - 95 e^{10} - 24 e^{9} - 47 e^{8} - 19 e^{7} - 232 e^{6}\\ - 119 e^{5} + 1480 e^{4} + 369 e^{3} - 1017 e^{2} + 149 e + 197
\end{array}\right) \\ \hline 
c & \dfrac{1}{402}\left(\begin{array}{c}116 e^{11} - 423 e^{10} + 853 e^{9} - 655 e^{8} - 768 e^{7} + 2238 e^{6}\\ - 1982 e^{5} - 766 e^{4} + 2806 e^{3} - 277 e^{2} - 441 e - 3\end{array}\right)
\end{array}$

%

$\begin{array}{l|l}
Group & \ZZ/37\ZZ \\ \hline
j & 0 \\ \hline 
Field & \begin{array}{c}e^{12} - 4 e^{11} + 11 e^{10} - 21 e^{9} + 32 e^{8} - 40 e^{7}\\ + 45 e^{6} - 46 e^{5} + 40 e^{4} - 26 e^{3} + 12 e^{2} - 4 e + 1 = 0\end{array} \\ \hline
b & \dfrac{1}{37}\left(\begin{array}{c}-196 e^{11} + 657 e^{10} - 1789 e^{9} + 3384 e^{8} - 5292 e^{7} + 6890 e^{6}\\ - 7695 e^{5} + 7154 e^{4} - 4851 e^{3} + 2221 e^{2} - 773 e + 181
\end{array}\right) \\ \hline 
c & \dfrac{1}{37}\left(\begin{array}{c}24 e^{11} - 162 e^{10} + 432 e^{9} - 952 e^{8} + 1462 e^{7} - 1928 e^{6}\\ + 2090 e^{5} - 2060 e^{4} + 1630 e^{3} - 870 e^{2} + 294 e - 109
\end{array}\right)
\end{array}$

%

$\begin{array}{l|l}
Group & \ZZ/42\ZZ\footnotemark \\ \hline
j & 54000 \\ \hline 
Field & \begin{array}{c}e^{12} - 4 e^{11} + 8 e^{10} - 11 e^{9} + 13 e^{8} - 14 e^{7}\\ + 15 e^{6} - 14 e^{5} + 7 e^{4} + 3 e^{3} - 5 e^{2} + e + 1 = 0\end{array} \\ \hline
b & \left(\begin{array}{c}1416 e^{11} - 6140 e^{10} + 13362 e^{9} - 19953 e^{8} + 24872 e^{7} - 27802 e^{6}\\ + 30076 e^{5} - 29333 e^{4} + 19087 e^{3} - 1483 e^{2} - 7097 e + 4041
\end{array}\right) \\ \hline 
c & \left(\begin{array}{c}27 e^{11} - 138 e^{10} + 342 e^{9} - 563 e^{8} + 754 e^{7} - 896 e^{6}\\ + 1000 e^{5} - 1038 e^{4} + 847 e^{3} - 370 e^{2} - 7 e + 66
\end{array}\right)
\end{array}$\footnotetext{Note that this elliptic curve is isogenous to the one with $j = 0$ and torsion $\ZZ/2\ZZ\oplus\ZZ/42\ZZ$.}

%

$\begin{array}{l|l}
Group & \ZZ/57\ZZ \\ \hline
j & 0 \\ \hline 
Field & \begin{array}{c}e^{12} - 2 e^{11} + 5 e^{10} - 10 e^{9} + 16 e^{8} - 22 e^{7}\\ + 30 e^{6} - 31 e^{5} + 28 e^{4} - 27 e^{3} + 19 e^{2} - 7 e + 1 = 0\end{array} \\ \hline
b & \left(\begin{array}{c}18509 e^{11} - 25122 e^{10} + 76123 e^{9} - 135966 e^{8}\\ + 207749 e^{7} - 272291 e^{6} + 378051 e^{5} - 328154 e^{4}\\ + 303397 e^{3} - 302371 e^{2} + 154326 e - 27788
\end{array}\right) \\ \hline 
c & \left(\begin{array}{c}128 e^{11} - 62 e^{10} + 446 e^{9} - 532 e^{8} + 876 e^{7} - 986 e^{6}\\ + 1542 e^{5} - 670 e^{4} + 1136 e^{3} - 872 e^{2} + 18 e + 71
\end{array}\right)
\end{array}$


$\begin{array}{l|l}
Group & \ZZ/2\ZZ\oplus\ZZ/12\ZZ \\ \hline
j & j^3 + 3491750j^2 - 5151296875j + 12771880859375 =0 \\ \hline 
Field & \begin{array}{c}e^{12} - 4e^{11} + 11e^{10} - 28e^9 + 63e^8 - 114e^7\\ + 161e^6 - 174e^5 + 141e^4 - 82e^3 + 33e^2 - 8e + 1 = 0\end{array} \\ \hline
b & \dfrac{1}{8}\left(\begin{array}{c}46e^{11} - 165e^{10} + 327e^9 - 914e^8 + 1949e^7 - 2883e^6\\ + 3279e^5 - 2583e^4 + 1240e^3 - 576e^2 + 169e - 34\end{array}\right) \\ \hline 
c & \dfrac{1}{8}\left(\begin{array}{c}86e^{11} - 295e^{10} + 760e^9 - 1947e^8 + 4217e^7 - 7168e^6\\ + 9344e^5 - 9004e^4 + 6265e^3 - 2894e^2 + 769e - 103\end{array}\right)
\end{array}$


$\begin{array}{l|l}
Group & \ZZ/2\ZZ\oplus\ZZ/18\ZZ \\ \hline
j & 8000 \\ \hline 
Field & \begin{array}{c}e^{12} - 4e^{11} + 4e^{10} + 8e^9 - 25e^8 +  24e^7\\ + 4e^6 - 36e^5 + 46e^4 - 32e^3 + 14e^2 - 4e + 1 = 0\end{array} \\ \hline
b & \dfrac{1}{369}\left(\begin{array}{c}-1450e^{11} + 3898e^{10} + 304e^9 - 13733e^8 + 17660e^7 - 2419e^6\\ -  19740e^5 + 26634e^4 - 18553e^3 + 5681e^2 - 2148e + 155\end{array}\right) \\ \hline 
c & \dfrac{1}{123}\left(\begin{array}{c}1108e^{11} - 3354e^{10} + 756e^9 + 10638e^8 - 17099e^7 + 6109e^6\\ + 14805e^5 - 25449e^4 + 20608e^3 - 8740e^2 + 2841e - 680\end{array}\right)
\end{array}$

%

$\begin{array}{l|l}
Group & \ZZ/2\ZZ\oplus\ZZ/26\ZZ \\ \hline
j & 0 \\ \hline 
Field & \begin{array}{c}e^{12} - 5e^{11} + 11e^{10} - 9e^9 - 7e^8 + 24e^7\\ - 21e^6 + 21e^4 - 25e^3 + 16e^2 - 6e + 1 = 0\end{array} \\ \hline
b & \dfrac{1}{91}\left(\begin{array}{c}-781 e^{11} + 3351 e^{10} - 5753 e^{9} + 969 e^{8} + 9554 e^{7} - 12544 e^{6}\\ + 1862 e^{5} + 8736 e^{4} - 11312 e^{3} + 6330 e^{2} - 1292 e - 19
\end{array}\right) \\ \hline 
c & \dfrac{1}{91}\left(\begin{array}{c}-198 e^{11} + 1336 e^{10} - 3552 e^{9} + 3848 e^{8} + 1816 e^{7} - 9198 e^{6}\\ + 8344 e^{5} + 1134 e^{4} - 8428 e^{3} + 8618 e^{2} - 4202 e + 785
\end{array}\right)
\end{array}$

%

$\begin{array}{l|l}
Group & \ZZ/2\ZZ \oplus \ZZ/28\ZZ \\ \hline
j & -3375 \\ \hline 
Field & \begin{array}{c}e^{12} - 4 e^{11} + 5 e^{10} + 3 e^{9} - 11 e^{8} - 3 e^{7}\\ + 35 e^{6} - 47 e^{5} + 27 e^{4} - 4 e^{3} - e^{2} - e + 1 = 0\end{array} \\ \hline
b & \dfrac{1}{43}\left(\begin{array}{c}1252 e^{11} - 3557 e^{10} + 1151 e^{9} + 6917 e^{8} - 4746 e^{7} - 13843 e^{6}\\ + 26751 e^{5} - 17575 e^{4} + 2938 e^{3} + 964 e^{2} + 523 e - 674
\end{array}\right) \\ \hline 
c & \dfrac{1}{43}\left(\begin{array}{c}103 e^{11} - 501 e^{10} + 525 e^{9} + 659 e^{8} - 1439 e^{7} - 1039 e^{6}\\ + 4489 e^{5} - 4371 e^{4} + 1157 e^{3} + 216 e^{2} - 45 e - 219
\end{array}\right)
\end{array}$


$\begin{array}{l|l}
Group & \ZZ/2\ZZ\oplus \ZZ/42\ZZ \\ \hline
j & 0 \\ \hline 
Field & \begin{array}{c}e^{12} - 4 e^{11} + 8 e^{10} - 11 e^{9} + 13 e^{8} - 14 e^{7}\\ + 15 e^{6} - 14 e^{5} + 7 e^{4} + 3 e^{3} - 5 e^{2} + e + 1 = 0\end{array} \\ \hline
b & \left(\begin{array}{c}17 e^{11} - 40 e^{10} + 61 e^{9} - 71 e^{8} + 82 e^{7} - 81 e^{6}\\ + 95 e^{5} - 59 e^{4} - 12 e^{3} + 41 e^{2} - 6 e - 7
\end{array}\right) \\ \hline 
c & \left(\begin{array}{c}6 e^{11} - 18 e^{10} + 28 e^{9} - 34 e^{8} + 38 e^{7} - 40 e^{6}\\ + 44 e^{5} - 36 e^{4} + 20 e^{2} - 8 e - 5
\end{array}\right)
\end{array}$


$\begin{array}{l|l}
Group & \ZZ/3\ZZ\oplus\ZZ/12\ZZ \\ \hline
j & 54000 \\ \hline 
Field & \begin{array}{c}e^{12} - 3e^{10} - 2e^9 + 12e^8 - 6e^7 - 3e^6\\ - 12e^5 + 36e^4 - 38e^3 + 21e^2 - 6e + 1 = 0\end{array} \\ \hline
b & \dfrac{1}{7843}\left(\begin{array}{c}-208660e^{11} - 118345e^{10} + 606681e^9 + 844271e^8\\ -    2044930e^7 - 126154e^6 + 712244e^5 + 3124663e^4\\ -
    5460349e^3 + 4188689e^2 - 1329972e + 272478\end{array}\right) \\ \hline 
c & \dfrac{1}{7843}\left(\begin{array}{c}55805e^{11} + 25237e^{10} - 174599e^9 - 227269e^8\\ +
    578726e^7 + 22958e^6 - 227941e^5 - 894570e^4\\ +
    1575065e^3 - 1231382e^2 + 392162e - 82582\end{array}\right)
\end{array}$

$\begin{array}{l|l}
Group & \ZZ/3\ZZ\oplus\ZZ/18\ZZ \\ \hline
j & 8000 \\ \hline 
Field & \begin{array}{c}e^{12} - 4 e^{11} + 12 e^{10} - 24 e^{9} + 38 e^{8} - 50 e^{7}\\ + 52 e^{6} - 48 e^{5} + 36 e^{4} - 24 e^{3} + 15 e^{2} - 6 e + 3 = 0\end{array} \\ \hline
b & \dfrac{1}{53}\left(\begin{array}{c}-262 e^{11} + 1376 e^{10} - 4458 e^{9} + 10137 e^{8} - 17241 e^{7} + 23682 e^{6}\\ - 25526 e^{5} + 22112 e^{4} - 14766 e^{3} + 6941 e^{2} - 2382 e + 151
\end{array}\right) \\ \hline 
c & \dfrac{1}{53}\left(\begin{array}{c}131 e^{11} - 476 e^{10} + 1328 e^{9} - 2339 e^{8} + 3188 e^{7} - 3414 e^{6}\\ + 2481 e^{5} - 1304 e^{4} + 69 e^{3} + 319 e^{2} - 293 e + 110
\end{array}\right)
\end{array}$

$\begin{array}{l|l}
Group & \ZZ/3\ZZ\oplus\ZZ/21\ZZ \\ \hline
j & 0 \\ \hline 
Field & \begin{array}{c}e^{12} - 6 e^{11} + 18 e^{10} - 35 e^{9} + 54 e^{8} - 72 e^{7}\\ + 84 e^{6} - 81 e^{5} + 66 e^{4} - 44 e^{3} + 21 e^{2} - 6 e + 1 = 0\end{array} \\ \hline
b & \dfrac{1}{49}\left(\begin{array}{c}-4894 e^{11} + 30046 e^{10} - 87461 e^{9} + 154268 e^{8}\\ - 201926 e^{7} + 235109 e^{6} - 256228 e^{5} + 214939 e^{4}\\ - 117237 e^{3} + 38057 e^{2} - 7246 e + 352
\end{array}\right) \\ \hline 
c & \dfrac{1}{49}\left(\begin{array}{c}290 e^{11} - 1742 e^{10} + 4986 e^{9} - 8686 e^{8} + 11366 e^{7} - 13286 e^{6}\\ + 14434 e^{5} - 12010 e^{4} + 6618 e^{3} - 2248 e^{2} + 458 e - 41
\end{array}\right)
\end{array}$

$\begin{array}{l|l} \textrm{Group, Field} & \ZZ/7\ZZ\oplus \ZZ/7\ZZ, \QQ(\zeta_{21}) \\ \hline (j,b,c) & (0,\zeta_3, -1)\end{array}$

\subsection{$K$ is a number field of degree 13.}

$$E(K)[\tors]\in \{0,\ZZ/2\ZZ, \ZZ/3\ZZ, \ZZ/4\ZZ, \ZZ/6\ZZ, \ZZ/2\ZZ \oplus \ZZ/2\ZZ\}.$$

No subgroups occur in degree 13 which do not occur over $\QQ$.

\end{document}